\newtheorem{thm}{Theorem}[subsection]
\newtheorem{propose}[thm]{Proposition}
\newtheorem{lemma}[thm]{Lemma}
\newtheorem{cor}[thm]{Corollary}
\theoremstyle{definition}
\newtheorem{defn}[thm]{Definition}
\newtheorem{axiom}[thm]{Axiom}
\newtheorem{remark}[thm]{Remark}
\newtheorem{remarks}[thm]{Remarks}
\newtheorem{example}[thm]{Example}
\newcommand{\N}{\mathbb{N}}
\newcommand{\Z}{\mathbb{Z}}     
\newcommand{\R}{\mathbb{R}} 
\newcommand{\C}{\mathbb{C}}     
\newcommand{\Aff}{\mathbb{A}}   
\renewcommand{\d}{{\text{\LARGE $\cdot $}}}
\newcommand{\Spec}{\operatorname{Spec}} 
\newcommand{\Hom}{\operatorname{\underline{\sf Hom}}} 
\newcommand{\Ex}{\operatorname{\sf Ex}}      
\newcommand{\Psh}{\operatorname{Pshv}}
\newcommand{\SPsh}{\operatorname{SPshv}}
\newcommand{\Ch}{\operatorname{Ch}}
\newcommand{\Set}{\operatorname{Set}}
\newcommand{\Top}{\operatorname{CTop}}
\newcommand{\SSet}{\operatorname{SSet}}
\newcommand{\Sm}{\operatorname{Sm}}
\newcommand{\Sch}{\operatorname{Sch}}
\newcommand{\RMod}{\text{$R$-{Mod}}}
\newcommand{\Rmod}{\text{$R$-{mod}}}
\newcommand{\iRMod}{\text{\rm $R$-{\underline{Mod}}}}
\newcommand{\iRmod}{\text{\rm $R$-{\underline{mod}}}}
\newcommand{\Ind}{\operatorname{Ind}}
\newcommand{\im}{\operatorname{Im}}        
\renewcommand{\ker}{\operatorname{Ker}}  
\newcommand{\coker}{\operatorname{Coker}} 
\newcommand{\tr}{{\operatorname{tr}}}        
\newcommand{\qi}{\text{q.i.}\,}      
\newcommand{\by}[1]{\stackrel{#1}{\rightarrow}}
\newcommand{\longby}[1]{\stackrel{#1}{\longrightarrow}}
\renewcommand{\tilde}{\widetilde}
\newcommand{\df}{\mbox{\,${:=}$}\,}
\newcommand{\ie}{{\it i.e. }}
\newcommand{\cf}{{\it cf. }}
\newcommand{\eg}{{\it e.g. }}
\newcommand{\Nis}{{\operatorname{Nis}}}
\newcommand{\into}{\hookrightarrow}
\newcommand{\onto}{\mbox{$\to\!\!\!\!\to$}}
\newcommand{\qto}{\rightleftarrows}
\renewcommand{\lim}{\varprojlim}
\newcommand{\boxtensor}{\def\boxtimesten{\Box\kern-7.59pt\raise1.2pt
\hbox{$\times$} }}                                  
\newcounter{elno}                   
\newcommand{\cA}{\mathcal{A}}
\newcommand{\cC}{\mathcal{C}}
\newcommand{\cD}{\mathcal{D}}
\newcommand{\cE}{\mathcal{E}}
\newcommand{\cP}{\mathcal{P}}
\newcommand{\cR}{\mathcal{R}}
\renewcommand{\phi}{\varphi}
\renewcommand{\epsilon}{\varepsilon}
\title{On topological motives}
\author{Luca Barbieri-Viale}
\address{Dipartimento di Matematica ``F. Enriques'', Universit{\`a} degli Studi di Milano\\  Via C. Saldini, 50\\ I-20133 Milano\\ Italy }
\email{luca.barbieri-viale@unimi.it}
\subjclass[2010]{14F99; 18G60; 55N35; 03C60}
\keywords{Cohomology theory; motives}
\begin{document}
\begin{abstract}
Following Eilenberg-Steenrod axiomatic approach we construct the universal ordinary homology theory for any homological structure on a given category by representing ordinary theories with values in abelian categories. For a convenient category of spaces we then obtain a universal abelian category which can be actually described for CW-complexes as the category of hieratic modules.
\end{abstract}
\maketitle
\tableofcontents
\section*{Introduction}
We introduce homological categories defining ordinary and extraordinary  homology theories with values in abelian categories; by resuming and generalising the Eilenberg-Steenrod axiomatic approach \cite{ES} (see also \cite[Sect.\,2.3]{Ha}) we show that the setting treated in \cite{UCTI} is sufficiently malleable to be used to construct universal ordinary homologies on any homological category. For suitable topological categories, the context of ordinary theories or theories with transfers is sufficient for ``topological motives'' via universal ordinary and extraordinary homologies, in addition to the universal homotopy theories \cite{DU}. For any commutative unitary ring $R$ we get that ordinary $R$-linear topological motives associated with CW-complexes are hieratic $R$-modules (\ie the indization of Freyd's free $R$-linear abelian category on a point); remark that this result implies that topological $R$-linear Nori motives are given by $R$-modules if the ring $R$ is coherent, \ie the universal abelian category generated by usual singular homology on CW-complexes is a canonical quotient of ordinary topological motives (see \cite{HMS} for Nori's construction).\\

Let $\cC$ be a category together with a distinguished subcategory  and let $\cC^{\square}$ be a category of pairs (as in \cite{UCTI}): $\Hom (\cC, \cA)$ and $\Hom (\cC^{\square}, \cA)$ are the categories of homologies on $\cC$ and relative homologies on $\cC^{\square}$ with values in an $R$-linear abelian category $\cA$. 
We introduced universal abelian categories $\cA(\cC)$ and $\cA_\partial(\cC)$ generated by  homology and relative homology theory, respectively. These are given by 
$$H =\{H_i\}_{i\in \Z } : \cC \to  \cA (\cC) \hspace{0.2cm}\text{and}\hspace{0.2cm} H =\{H_i\}_{i\in \Z } : \cC^{\square}\to  \cA_\partial (\cC)$$
family of functors satisfying basic axioms, see \cite[Thm.\,2.1.2 \& 2.2.4]{UCTI}. In particular, any relative homology $H'\in \Hom (\cC^{\square}, \cA)$ is classified by $r_{H'}: \cA_\partial(\cC)\to \cA$ a unique (up to natuarl isomorphism) exact functor such that $r_{H'}(H) =H'$.

Adding axioms to these ``free'' theories we obtain decorations $\cA^\dag (\cC)$ and $\cA_\partial^\dag (\cC)$. Adding the point axiom $\dag ={\rm point}$ (see \cite[Axiom 2.3.2]{UCTI}) we obtain $\cA_\partial^{\rm point}(\cC)$ which is a quotient of $\cA_\partial (\cC)$ (see \cite[Prop.\,2.1.7 \& 2.3.3]{UCTI}). Furthermore, adding excision and homotopy invariance axioms to the relative theory with the point axiom we obtain more ``relations'' providing the universal Eilenberg-Steenrod abelian category $\cA_\partial^{\rm ord}(\cC)$ as a quotient of $\cA_\partial^{\rm point}(\cC)$. 
For  example,  for $\cC={\bf 1}$, the point category and $\cC={\bf 2}$, the two objects category determined by the ordinal $2\in \N$, we get 
$$\cA^{\rm point} ({\bf 1})\cong \cA^{\rm point}_\partial ({\bf 2})\cong \iRmod$$ where the indization 
$\Ind \iRmod = \iRMod$ is the Grothendieck category of hieratic $R$-modules: see \cite[Example 2.1.8 \& 2.3.4]{UCTI} and \cite[Def.\,1.3.3]{UCTI} for the definition and the universal property of hieratic $R$-modules. In particular, for $R$ coherent we have an exact $R$-linear quotient functor $$r_R: \iRmod\to\Rmod$$ to the abelian category of finitely presented $R$-modules such that $r_R(|R|)=R$ where $|R|$ is the universal object.\\

Actually, let $\cC^\square$ be equipped with a collection $\Xi$ of excisive squares $\epsilon : (Y, Z)\to (W, X)$ in $\cC^\square$ (see Definition \ref{excisive}) and a cylinder functor $I$, \eg induced by an interval object $I\in \cC$, providing $I$-homotopies (see Definitions \ref{interval} and \ref{Ihomotopic}). Say that $(\cC^\square,  \Xi, I)$ is a \emph{homological structure} on the underlying category $\cC$ together with a selection of point objects (see Definition \ref{ES}); we clearly have a corresponding notion of \emph{homological functor} (see Definition \ref{homdef}). Consider the $I$-homotopy invariant homologies (see Axiom \ref{hinvax}) satisfying excision (see Axiom \ref{excisax}): call these relative homologies \emph{ordinary} homologies if they also satisfy the point axiom (see Definition \ref{ES}). 

We then get the \emph{universal ordinary homology} $H$ with values in the abelian category $\cA_\partial^{\rm ord}(\cC)$ which is a quotient of the ``free" one $\cA_\partial(\cC)$ and representing the subfunctor $\Hom_{\rm ord} (\cC^{\square}, -)\subseteq \Hom (\cC^{\square}, - )$ of ordinary theories. Similarly, further imposing additivity (see \cite[Axiom 2.3.8]{UCTI} and \cf \cite{M}), there is a universal Grothendieck abelian category $\cA_\partial^{\rm Ord}(\cC)$ which is a Grothendieck quotient of $\Ind \cA_\partial^{\rm ord} (\cC)$ (see Theorem \ref{univES}):  the universal ordinary homology is given by composition in the following commutative diagram
$$\xymatrix{\cC^{\square}\ar[r]^-{}  \ar[dr]_-{H} & \ar[r]^-{}  \cA_\partial (\cC) \ar[d]^-{} &\Ind \cA_\partial (\cC) \ar[d]^-{} \\
&  \cA_\partial^{\rm ord} (\cC) \ar[r]^-{} & \cA_\partial^{\rm Ord} (\cC)}$$ 
where the vertical functors are quotients. We also get the model category $\Ch(\cA_\partial^{\rm Ord} (\cC))$ given by chain complexes and a canonical Quillen pair $U\cC^\square\qto  \Ch(\cA_\partial^{\rm Ord} (\cC))$ induced by the universal homotopy theory \cite{DU}  (see Theorem \ref{hothom}).\\

Moreover, if $\cC$ is provided with a cosimplicial object, picking singular homologies with values in Grothendieck categories, we obtain universal singular homology
$$H^{\rm Sing}=\{H^{\rm Sing}_i\}_{i\in \N}: \cC^\square\to \cA^{\rm Sing}_\partial(\cC)$$ representing $\Hom_{\rm Sing} (\cC^{\square}, -)\subseteq \Hom_{\rm Ord} (\cC^{\square}, -)$ (see Theorem \ref{unising}) forcing singular homologies to be ordinary.  Therefore, we get the following commutative diagram
$$\xymatrix{\cC^{\square}\ar[r]^-{}  \ar[dr]_-{H^{\rm Sing}} & \ar[r]^-{} \Ind  \cA_\partial (\cC) \ar[d]^-{} &\cA_\partial^{\rm Ord} (\cC) \ar[d]^-{r_{H^{\rm Sing}}} \\
&  \iRMod \ar[r]^-{} & \cA_\partial^{\rm Sing} (\cC)}$$ 
where $\cA_\partial^{\rm Sing} (\cC)$ is a quotient of  $\iRMod$. If singular homologies are ordinary with respect to the homological structure we have that 
$\iRMod\cong\cA^{\rm Sing}_\partial(\cC)$ (see  Lemma \ref{relsing}).\\

For $\cC = \Top$ a convenient category of topological spaces endowed with the standard homological structure $(\Top^\square,\Xi, I)$ (see Definition \ref{stop}) we thus obtain a  Grothendieck category of ordinary topological motives $\cA^{\rm Ord}_\partial(\Top)$ along with 
 $$H: \Top^\square\to \cA^{\rm Ord}_\partial(\Top)$$
the universal Eilenberg-Steenrod or ``motivic'' homology. 
 By the way we also have versions with transfers (see Remark \ref{trans}).
Note that usual singular homology with $R$ coefficients $H^{\rm Sing}(-;R):\Top^{\square}\to \RMod$  yields the following picture 
$$\xymatrix{
&\cA_\partial^{\rm Ord} (\Top)\ar[dr]^-{r_{H^{\rm Sing}}}\ar[d]&\\
\Top^{\square}\ar[r]^-{H^{\rm Sing}} \ar[dr]_-{H^{\rm Sing}(-;R)} \ar[ur]^-{H} & 
\iRMod \ar@/_1.2pc/[u]_{!_\partial} \ar[r]^-{\simeq} \ar[d]^-{\Ind r_R} &\cA_\partial^{\rm Sing} (\Top) \ar[dl]^-{\ \ r_{H^{\rm Sing}(-;R)}} \\
&  \RMod & }$$ 
where the canonical exact functor $\Ind r_R$ is a quotient if $R$ is coherent and the canonical functor $!_\partial$  is a splitting functor. 

In general, a concrete description of these universal categories $\cA^{\rm Ord}_\partial(\cC)$ is missing. However, under canonical ``cellular" conditions on the homological structure $(\cC^\square,  \Xi, I)$ (see Definition \ref{ordcell}) the ordinary homologies are represented by a corresponding  ``cellular" complex (see Proposition \ref{cellularity}). For topological CW complexes we then get that ordinary topological motives are just given by $\iRMod$ (see Theorem \ref{ordCW}) and topological Nori motives are given by $\RMod$ if $R$ is coherent (see Corollary \ref{NoriCW}).

\subsubsection*{Acknowledgements} 
I would like to thank Joseph Ayoub for brainstorming on matters treated in this paper and, in particular, on the Theorem \ref{ordCW}. Thanks to UZH \& FIM--ETH Z\"urich  for providing support, hospitality and excellent working conditions. 

\subsubsection*{Notations and assumptions} We shall adopt the same notations and conventions as in \cite{UCTI}. Let $R$ be a commutative unitary ring. Let $\iRMod$ be the universal category of hieratic $R$-modules in \cite{UCTI}. 
 Let $\cC$ be any (small) category and let $\cC^\square$ be the category whose objects are morphisms of a distinguished subcategory of $\cC$ and morphisms are commutative squares of $\cC$. 
\section{Universal Eilenberg-Steenrod homology}
\subsection{Excision and Mayer-Vietoris}
For each $W\in \cC$ we consider a set $\cE_W$ of commutative squares of $\cC$
$$\xymatrix{Z\ar[r]^-{}  \ar[d]_-{} & Y \ar[d]^-{} \\
X \ar@{>}[r]^-{} & W}$$ 
where the horizontal morphisms are distinguished and we assume that the squares in  $\cE_W$ are stable under  isomorphisms. Equivalently, such a square is a morphism $\epsilon : (Y, Z)\to (W, X)$ in $\cC^\square$ and the  assumption  is that for each $W\in \cC$ the set $\cE_W$ contains the identity of $(W, X)$ in  $\cC^\square$  for any distinguished morphism $X\to W$ and the isomorphism class of $\epsilon$ in the category of cubes $\cC^\blacksquare$ given by $\cC^\square$, see \cite[Remark 2.2.2 b)]{UCTI}. 
\begin{defn}\label{excisive}
Say that such a set of squares $\cE_W$ is a set of \emph{excisive squares over} $W$.  We say that $\cC^\square$ is provided with a collection $\Xi = \{\cE_W\}_{W\in \cC}$ of excisive squares if a set of excisive squares  $\cE_W$  over $W$ is given for any $W\in \cC$. Say that $\Xi ={\rm Iso}$ is \emph{initial} (or trivial or coarse) if  $\cE_W$ contains only isomorphism classes of objects of  $\cC^\square$ for any $W\in \cC$. Say that $\Xi =\cC^\square$ is \emph{final} (or finest or discrete).
\end{defn}
 \begin{axiom}[Excision] \label{excisax} 
 For $\cC^\square$ equipped with a collection $\Xi$ of excisive squares we say that
 $H\in \Hom (\cC^\square, \cA)$ satisfies \emph{excision} if  we have isomorphisms $\epsilon_i : H_i (Y,Z)\cong H_i  (W, X)$ induced by $\epsilon : (Y, Z)\to (W, X)$ for all excisive squares over $W\in \cC$  and $i \in \Z$.
 \end{axiom} 
 \begin{remark}\label{exctriv}
 We point out that all relative homologies satisfy excision with respect to the initial collection  $\Xi ={\rm Iso}$ and no non trivial homology satisfies excision if $\Xi$ is final and $\cC$ has an initial object. If $\Xi$ contains $(Y, Y)\to (X, Y)$ for $Y\neq 0$ where $0$ is strictly initial we get that $H_i  (X,Y)=0$ for all $i\in\Z$ if $Y\neq 0$ so that $H_i (Y)\cong H_i  (X)$ for all $(X, Y)\in \cC^\square$. Moreover, if $\Xi$ just contains $(X, 0)\to (X, Y)$ then $H_i (X)\cong H_i  (X,Y)$ for all $i\in \Z$ which implies that $H$ is trivial if this holds for all $(X, Y)\in \cC^\square$. 
 \end{remark}
 As expected, excision implies $cd$-exactness (\cf \cite{Vcd} and \cite[Def.\,2.52]{BV}). 
 \begin{propose}[Mayer-Vietoris sequence]\label{MV}
 Let $\cC^\square$ be with a collection $\Xi$ of excisive squares and an initial object. If $H\in \Hom (\cC^\square, \cA)$ satisfies excision then there is a  long exact sequence 
$$\cdots \to H_i(Z)\to H_i(Y)\oplus H_i(X) \to H_i(W)\to H_{i-1}(Z)\to\cdots$$
 for each $ (Y, Z)\to (W, X)$  excisive square. The sequence is natural with respect to cubes which are morphisms of excisive square. 
 \end{propose}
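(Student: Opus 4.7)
The plan is to derive the Mayer--Vietoris sequence from the two long exact sequences of the pairs $(Y,Z)$ and $(W,X)$ via the classical Barratt--Whitehead braid argument, with the excision isomorphism supplying the identification that splices them together. Since $\cC$ has an initial object $0$, absolute homology is defined as $H_i(U):=H_i(U,0)$, and the natural long exact sequence of a pair is part of the structure of any $H\in\Hom(\cC^\square,\cA)$ (cf.\ \cite[Thm.\,2.2.4]{UCTI}).

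Concretely, I would contemplate the commutative ladder induced by $\epsilon:(Y,Z)\to(W,X)$,
$$
\xymatrix@C=1pc{
\cdots\ar[r] & H_i(Z)\ar[r]\ar[d] & H_i(Y)\ar[r]\ar[d] & H_i(Y,Z)\ar[r]^-{\partial}\ar[d]^{\epsilon_i}_{\cong} & H_{i-1}(Z)\ar[r]\ar[d] & \cdots \\
\cdots\ar[r] & H_i(X)\ar[r] & H_i(W)\ar[r] & H_i(W,X)\ar[r]_-{\partial} & H_{i-1}(X)\ar[r] & \cdots
}
$$
in which the central vertical is an isomorphism by Axiom \ref{excisax}. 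Writing $\iota:Z\to Y$, $\kappa:Z\to X$, $f:Y\to W$, $g:X\to W$ for the four arrows of the excisive square, I would define
$$\alpha_i:=(\iota_*,-\kappa_*),\qquad \beta_i:=f_*+g_*,\qquad \delta_i\colon H_i(W)\longrightarrow H_i(W,X)\xrightarrow{\epsilon_i^{-1}} H_i(Y,Z)\xrightarrow{\partial} H_{i-1}(Z).$$
Commutativity of the square yields $\beta_i\alpha_i=0$, while the vanishing of $\alpha_{i-1}\delta_i$ and $\delta_i\beta_i$ follows from the two pair long exact sequences composed with the commutativity of the ladder.

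Exactness at each of the three positions is then a routine six-term diagram chase in the ladder: an element in the kernel of one of the three maps is transported across the ladder using $\epsilon_i$ (or $\epsilon_i^{-1}$), and the resulting preimages are combined via the pair sequences to produce the required lift. Naturality with respect to cubes, i.e.\ morphisms of excisive squares in $\cC^\blacksquare$, follows at once from naturality of the pair long exact sequence and from the fact that the excision isomorphisms $\epsilon_i$ are components of natural transformations in $\cC^\square$. There is no genuine conceptual obstacle; the only care required is in tracking the signs in $\alpha_i$ so that $\beta_i\alpha_i=0$, and in verifying that $\delta_i$ is well defined after inversion of $\epsilon_i$. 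This is the standard derivation of Mayer--Vietoris from excision.
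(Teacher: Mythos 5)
Your proposal is correct and follows essentially the same route as the paper: the morphism of pairs induced by the excisive square gives a ladder between the two long exact sequences, excision identifies the relative terms, and the Barratt--Whitehead diagram chase produces the Mayer--Vietoris sequence, with naturality inherited from naturality of the pair sequences with respect to cubes. The only cosmetic remark is that the ``element'' chase should be read via generalized members (or an exact embedding) since $\cA$ is an abstract abelian category, which is standard and harmless.
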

 \begin{proof} Standard: the excisive square $ (Y, Z)\to (W, X)$ induces the following commutative square in 
 $\cC^\square$
  $$\xymatrix{(Z,0)\ar[r]^-{}  \ar[d]_-{} & (Y,Z) \ar[d]^-{} \\
(X,0)\ar[r]^-{} & (W,X)}$$
which is a particular $\partial$-cube; by naturality of $\partial$-cubes we get a morphism from the long exact sequence of the pair $(Y, Z)$ to the sequence of the pair $(W, X)$ which yields the claimed Mayer-Vietoris sequence by diagram chase. 
Naturality follows from naturality with respect to $\partial$-cubes. \end{proof}
If $\cC$ has an initial object $0$, considering $Z=0$ in the long exact sequence of Proposition \ref{MV} we obtain that $H_i(Y)\oplus H_i(X)\by{\simeq}H_i(W) $ if  
$$\xymatrix{0\ar[r]^-{}  \ar[d]_-{} & Y \ar[d]^-{} \\
X \ar[r]^-{} & W}$$ 
is an excisive square. In particular, if the coproduct exists we may set $W=X\coprod Y$ and we obtain: 
\begin{propose} \label{exprod}
Let $\cC$ be with finite coproducts such that the canonical morphisms to the coproduct are distinguished.
The category $\cC^\square$ has excisive coproducts for a relative homology if and only if the relative homology is (finitely) additive (compare with \cite[Axiom 2.3.8]{UCTI}).
\end{propose}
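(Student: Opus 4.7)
The plan is to read the proposition as a direct unpacking of the Mayer--Vietoris sequence of Proposition \ref{MV}, specialised to the single square
$$\xymatrix{0\ar[r]\ar[d] & Y\ar[d]\\ X\ar[r] & X\coprod Y.}$$
Both implications will hinge on the canonical splitting $H_i(X)\hookrightarrow H_i(X\coprod Y)$ supplied by the coproduct inclusion, viewed once through the Mayer--Vietoris sequence and once through the long exact sequence of the pair $(X\coprod Y, X)$.

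For the direction $(\Rightarrow)$ I would assume the coproduct squares lie in $\Xi$, so excision holds for them. Proposition \ref{MV} then yields, as already noted in the paragraph preceding the statement, the isomorphism $H_i(X)\oplus H_i(Y)\by{\simeq}H_i(X\coprod Y)$. A short check --- using that the Mayer--Vietoris differential is a difference of induced maps, and applying the special case $X=Y=0$ to force $H_i(0)=0$ --- will identify this isomorphism with the canonical map induced by the coproduct injections $X\to X\coprod Y$ and $Y\to X\coprod Y$. This is precisely the finite-additivity axiom of \cite[Axiom 2.3.8]{UCTI}.

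For the converse $(\Leftarrow)$ I would start from finite additivity, which gives $H_i(0)=0$ from the empty coproduct and shows that $H_i(X)\to H_i(X\coprod Y)$ is a split monomorphism with cokernel canonically identified with $H_i(Y)$. The LES of the pair $(X\coprod Y, X)$ then breaks into short exact sequences
$$0\to H_i(X)\to H_i(X\coprod Y)\to H_i(X\coprod Y,X)\to 0,$$
so $H_i(X\coprod Y,X)\cong H_i(Y)\cong H_i(Y,0)$. To conclude that the coproduct square is excisive I must recognise this isomorphism as the map $\epsilon_i$ induced by $\epsilon:(Y,0)\to(X\coprod Y,X)$; this will be a naturality chase using that $Y\to X\coprod Y$ is a coproduct inclusion and that the pair LES is natural with respect to morphisms in $\cC^\square$.

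The only real difficulty I expect is bookkeeping: making sure that the abstract isomorphisms produced by exactness and splitting coincide with the specific canonical maps (coproduct injections and $\epsilon_i$), rather than differing by a sign or some other automorphism. Conceptually both directions express the same fact --- namely the canonical split short exact sequence $H_i(X)\hookrightarrow H_i(X\coprod Y)\twoheadrightarrow H_i(Y)$ --- so once the relevant naturality diagrams are in place, the two implications become mirror images of each other.
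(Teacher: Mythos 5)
Your outline is close in spirit to the paper's proof, and its $(\Leftarrow)$ half essentially coincides with it: the paper compares the split exact sequence $0\to H_i(X)\to H_i(X)\oplus H_i(Y)\to H_i(Y)\to 0$ with the long exact sequence of the pair $(X\coprod Y,X)$ via the vertical maps $id$, $\gamma_i=u_i+\upsilon_i$ and $\kappa_i$, checks commutativity from the naturality identity $\kappa_i=\beta_i\upsilon_i$ together with $\beta_iu_i=0$, and then a single chase gives \emph{both} implications at once (if $\gamma_i$ is iso then $\alpha_i$ is mono, if $\kappa_i$ is iso then $\beta_i$ is epi; either way the connecting maps vanish, the long sequence breaks into short exact sequences, and the short five lemma applies). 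In particular the paper never invokes Proposition \ref{MV} and never needs $H_i(0)=0$.

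Two steps of your proposal need repair. First, the claim that the special case $X=Y=0$ forces $H_i(0)=0$ is wrong: for $X=Y=0$ the coproduct square is the identity square of $(0,0)$, excision for it is vacuous, and the resulting Mayer--Vietoris sequence $\cdots\to H_i(0)\to H_i(0)\oplus H_i(0)\to H_i(0)\to\cdots$ (diagonal followed by difference, zero connecting maps) is exact whatever $H_i(0)$ is, so it imposes no condition. The vanishing you want is instead supplied by the ambient framework ($H_i(0)$ is $H_i(0,0)$, which dies in the long exact sequence of the identity pair), and it is exactly what the paragraph preceding the proposition uses silently; similarly, deducing $H_i(0)=0$ from additivity presumes the axiom covers the empty family. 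Second, in the $(\Rightarrow)$ direction Proposition \ref{MV} only asserts the existence of some long exact sequence; to conclude additivity in the sense of \cite[Axiom 2.3.8]{UCTI} you must know its middle map is, up to a sign on one summand, the canonical $u_i+\upsilon_i$, which requires unwinding the Barratt--Whitehead construction inside the proof of Proposition \ref{MV} --- the ``short check'' you defer is a genuine piece of the argument, not mere bookkeeping. Both repairs are routine, but the cleaner route is the paper's single comparison diagram, which proves the two directions simultaneously and avoids both issues.
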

\begin{proof} For $(X, Y)\in \cC^\square$ consider the distinguished morphisms $u :X\to X\coprod Y$, $\upsilon : Y \to X\coprod Y$ and the induced  $\kappa: (Y, 0) \to (X\coprod Y, X)$ in $\cC^\square$. We then obtain the following diagram with exact rows
$$\xymatrix{0\ar[r]^-{} & H_i(X)\ar[r]^-{}\ar[dr]^-{u_i}  \ar[d]^-{||} & H_i(X)\oplus H_i(Y) \ar[d]^-{\gamma_i}\ar[r]^-{\pi_i} & H_i(Y) \ar[r]^-{}\ar[dl]_-{\upsilon_i} \ar[d]^-{\kappa_i} &  0 \\
\cdots\ar[r]^-{} & H_i(X)\ar[r]^-{\alpha_i}  & H_i(X\coprod Y) \ar[r]^-{\beta_i} & H_i(X\coprod Y, X) \ar[r]^-{}  &  \cdots}$$
where  the top row is given by the  canonical inclusion/projection for the direct sum (it is the Mayer-Vietoris sequence of the identity square). By naturality of the exact sequences of the pair compared via $\kappa$ we have that $\kappa_i =  \beta_i\upsilon_i$. 

Now the left square commutes, \ie $\alpha_i=u_i $, and the commutativity of the right square, \ie $\kappa_i\pi_i = \beta_i\gamma_i$ where $\pi_i $ is the canonical projection, holds true since  $\beta_iu_i =0$ and $\gamma_i = u_i +\upsilon_i$ so that we have $\beta_i\gamma_i (x, y) = \beta_iu_i(x) + \beta_i\upsilon_i (y)= \kappa_i(y)=\kappa_i\pi_i(x, y)$ for $(x, y)\in H_i(X)\oplus H_i(Y) $. Thus we obtain a map of exact sequences.
Then $\gamma_i: H_i(X)\oplus H_i(Y) \cong H_i (X\coprod Y)$ is an iso for all $i\in \Z$ if and only if $\kappa_i: H_i(Y)\cong H_i (X\coprod Y, X)$ is an iso for all $i\in \Z$, as it follows from an easy diagram chase. 
\end{proof}
\begin{example} \label{exalb}
Let $\cC = \cA$ be an abelian category with monos as distinguished subcategory and excisive coproducts. Moreover, for each mono $B\into A$  consider the following excisive square
  $$\xymatrix{B\ar[r]^-{}  \ar[d]_-{} & A\ar[d]^-{} \\
0\ar[r]^-{} & A/B}$$
and collect in $\Xi_\partial$ the isomorphism class of these together with the coproduct squares. A relative homology is excisive with respect to $\Xi_\partial$ if and an only if is a $\partial$-homology in the sense of \cite[Def.\,3.1.1]{UCTI}. 
\end{example}

\subsection{Homotopy invariance}
We shall consider a ``cylinder functor'' and an ``interval" in the weakest possible sense. For Voevodsky interval see \cite[\S 2.2]{Vh}. Recall that Kan \cite{KA} introduced cylinder functors and Grandis \cite{Gra} has shown that structured cylinder functors are a basis for homotopical algebra.
\begin{defn}\label{interval}
 A \emph{cylinder functor} is an endofunctor $ I : \cC^\square\to \cC^\square$ together with a natural transformation $ p: I (-) \to (-)$.  An \emph{interval} of $\cC$ is an object $I\in \cC$  such that the product $X\times I \in \cC$ exists for all objects $X\in \cC$ compatibly with the distinguished morphisms, \ie the following 
$$(X, Y)\leadsto I (X, Y) \df  (X\times I , Y\times I ): \cC^\square \to \cC^\square$$
is functorial.
\end{defn}
Actually, for any interval object $I\in \cC$, we have that  $p: I (X, Y)\to (X, Y)$ given by
$$p\df (\pi_X, \pi_Y) : (X\times I , Y\times I )\to (X,Y)$$ the canonical projections is clearly a natural transformation of functors in the variable $(X, Y)\in \cC^\square$. This is providing the cylinder functor induced by an interval object.
\begin{axiom}[$I$-invariance]  \label{Invax} 
For a cylinder functor $(I, p)$ on $\cC^\square$ the relative homology $H$ satisfies \emph{$I$-invariance} if  $p_i: H_i (I (X, Y))\cong  H_i  (X, Y)$ is an isomorphism for all $(X, Y)\in \cC^\square$ and $i \in \Z$.
\end{axiom} 
If $p$ is a natural equivalence then every relative homology is trivially $I$-invariant.  In general, $H$ is $I$-invariant if and only if $p: I (X, Y)\to(X, Y)$ is excisive.

Moreover, assume that $\cC\into \cD$ is a (full) subcategory inducing a faithful functor  $\cC^\square\into \cD^\square$ between distinguished subcategories. In general, for any cylinder functor $ I : \cD^\square\to \cD^\square$ we have:
\begin{defn}\label{Ihomotopic}
 Say that two parallel maps $\gamma, \delta :  (X,Y)\to (X',Y')\in \cC^\square$  are \emph{$I$-homotopic} if  there are  $\varsigma,\  \sigma: (X,Y)\to I (X, Y)$ and $\xi : I (X, Y)\to (X',Y')$, morphisms of  $\cD^\square$,  such that  $p\varsigma=p\sigma= id_{(X,Y)}$, $\gamma = \xi  \varsigma$ and $\delta =\xi \sigma$. Denote $\gamma \approx_I \delta :(X,Y)\to (X',Y')$ two $I$-homotopic morphims of $\cC^\square$.
 \end{defn}
 \begin{remark}
 Note that $\varsigma\approx_I \sigma$ for all morphisms $\varsigma,\  \sigma: (X,Y)\to I (X, Y)$ such that $p\varsigma=p\sigma= id_{(X,Y)}$.  For $\cC\neq \cD$, assume, for example, that $\cD$ has a terminal object $1$, pick an interval $I$ of the category $\cD$, see Definition \ref{interval}, such that $I\notin \cC$, eventually, and morphisms $\iota, \nu : 1\to I$. Note that we then have that  $ \iota_X ,  \nu_X : X\to X \times I $ for every $X\in \cC$ and we get morphisms $ \iota_{(X,Y)} ,  \nu_{(X,Y)} : (X, Y) \to (X\times I , Y\times I )$ in $\cD^\square$ such that $p \iota_{(X,Y)}=p \nu_{(X,Y)}= id_{(X,Y)}$. We then get a Kan cylinder functor on  $\cD^\square$ (\cf \cite[\S 2]{KA}) induced by the interval $I$ of the category $\cD$ and $\varsigma=\iota_{(X,Y)}$,  $\sigma = \nu_{(X,Y)}$ induced by $\iota, \nu : 1\to I$ can be taken in Definition \ref{Ihomotopic}. In particular, if  $\cC=\cD$  has an interval and
$\iota, \nu : 1\to I\in \cC$ then we get the usual $I$-homotopy. 
\end{remark}
In general, for such $\cC^\square\into \cD^\square$ and $ I : \cD^\square\to \cD^\square$ a cylinder functor we may also consider a generalised notion of $I$-homotopy invariance.
 \begin{axiom}[Homotopy invariance]  \label{hinvax} 
 We say that $H\in \Hom (\cC^\square, \cA)$ is \emph{$I$-homotopy invariant} if we have that $\gamma_i=  \delta_i: H_i (X, Y)\to H_i  (X', Y')$ for all   $\gamma \approx_I \delta: (X,Y)\to (X',Y')\in \cC^\square$ $I$-homotopic and $i \in \Z$.  \end{axiom} 
Let $\Hom_{I-\rm hi} (\cC^\square, - )\subset \Hom (\cC^\square, - )$ be the subfunctor of $I$-homotopy invariant  homologies. Let $\Hom_{I} (\cD^{\square}, - )$ be the 2-functor of $I$-invariant relative homologies.
 \begin{lemma} \label{homotopy}
For any cylinder functor $ I : \cD^\square\to \cD^\square$ and $H: \cD^\square\to \cA$, by restriction along $\cC^\square\into \cD^\square$, we have that  $I$-invariance over $\cD^\square$ implies  $I$-homotopy invariance over $\cC^\square$. If $\cC=\cD$ then $\Hom_{I} (\cC^\square, - ) \subseteq \Hom_{I-\rm hi} (\cC^\square, -)$ is a subfunctor. 
 \end{lemma}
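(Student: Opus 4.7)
The plan is to unwind both definitions and observe that the conclusion is essentially forced by uniqueness of sections of an isomorphism. Suppose $H \in \Hom(\cD^\square,\cA)$ is $I$-invariant, and let $\gamma \approx_I \delta : (X,Y) \to (X',Y')$ be two $I$-homotopic morphisms in $\cC^\square$. By Definition \ref{Ihomotopic}, we are given $\varsigma,\sigma : (X,Y) \to I(X,Y)$ and $\xi : I(X,Y) \to (X',Y')$ in $\cD^\square$ with $p\varsigma = p\sigma = \mathrm{id}_{(X,Y)}$, $\gamma = \xi\varsigma$ and $\delta = \xi\sigma$. Since $H$ is a functor on $\cD^\square$ and $(X,Y),(X',Y')\in\cC^\square$, the induced maps $\gamma_i,\delta_i : H_i(X,Y) \to H_i(X',Y')$ factor as $\gamma_i = \xi_i\varsigma_i$ and $\delta_i = \xi_i\sigma_i$.

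Next I would exploit $I$-invariance to compare $\varsigma_i$ and $\sigma_i$. Applying $H_i$ to the identities $p\varsigma = p\sigma = \mathrm{id}_{(X,Y)}$ yields $p_i\varsigma_i = p_i\sigma_i = \mathrm{id}_{H_i(X,Y)}$, so both $\varsigma_i$ and $\sigma_i$ are right inverses of $p_i$. By Axiom \ref{Invax}, $p_i : H_i(I(X,Y)) \to H_i(X,Y)$ is an isomorphism, hence its right inverse is unique; therefore $\varsigma_i = \sigma_i = p_i^{-1}$. Postcomposing with $\xi_i$ gives $\gamma_i = \xi_i\varsigma_i = \xi_i\sigma_i = \delta_i$, which is exactly $I$-homotopy invariance (Axiom \ref{hinvax}) of the restricted homology $H|_{\cC^\square}$.

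For the second assertion, when $\cC = \cD$ the restriction functor is the identity, so the argument above shows that every $H \in \Hom_I(\cC^\square,\cA)$ automatically lies in $\Hom_{I\text{-}\mathrm{hi}}(\cC^\square,\cA)$; naturality in $\cA$ is immediate since the $I$-invariance and $I$-homotopy invariance conditions are preserved by postcomposition with any exact (indeed any additive) functor, so the inclusion is one of subfunctors.

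I do not expect any real obstacle: the whole lemma reduces to the uniqueness of a section of an isomorphism, and the only care needed is to keep track of the fact that $\varsigma,\sigma,\xi$ live in $\cD^\square$ (so that $H_i$ can be applied to them) while $\gamma,\delta$ lie in $\cC^\square$ (so that the conclusion is a statement about the restricted theory).
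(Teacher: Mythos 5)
Your proof is correct and follows essentially the same route as the paper: apply $H_i$ to the homotopy data, use that $p_i$ is an isomorphism to force $\varsigma_i=\sigma_i$ (the paper states this directly, you phrase it as uniqueness of the section of an isomorphism), and conclude $\gamma_i=\xi_i\varsigma_i=\xi_i\sigma_i=\delta_i$; the subfunctor claim for $\cC=\cD$ is handled the same way. No gaps.
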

\begin{proof} For $H\in \Hom_{I} (\cD^{\square}, \cA)$ we have $p_i \varsigma_{ i}=p_{i} \sigma_i= id_{i}: H_i(X,Y)\to H_i(X, Y)$ and $p_i$ isomorphism whence
$\varsigma_i= \sigma_i : H_i(X, Y)\to H_i(I(X, Y))$ for all $(X,Y)\in \cD^{\square}$. In particular, for $ \gamma, \delta : (X,Y)\to (X',Y')\in \cC^\square$  we obtain $\gamma_i=  \delta_i$ if $\gamma \approx_I \delta$. 
\end{proof}
Recall that Voevodsky \cite[\S 2.2]{Vh} call an interval of $\cC$ with final object $1$, an interval $I^+$ together with morphisms $m:I^+\times I^+\to I^+$ and $i_0,i_1:1\to I^+$ such that 
$$
m(i_0\times id) = m(id\times i_0) = i_0p\hspace*{0.5cm}
m(i_1\times id) = m(id\times i_1) = id
$$
where $p:I^+\to 1$ is the canonical morphism, $id$ is the identity of $I^+$ and $i_0 \times id:I^+\cong 1\times I^+\to I^+\times I^+$, etc.
\begin{lemma} \label{hinv=hi} If a category $\cC$ has an interval object $I=I^+ $ in the sense of Voevodsky then $I$-homotopy invariance is equivalent to $I$-invariance.
\end{lemma}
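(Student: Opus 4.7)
The plan is to split into the two implications. One direction, that $I$-invariance implies $I$-homotopy invariance, is the content of Lemma \ref{homotopy} applied with $\cC=\cD$: whenever $p_i$ is an isomorphism and $p\varsigma=p\sigma=\mathrm{id}$, one forces $\varsigma_i=\sigma_i$, so any two $I$-homotopic maps induce the same map on $H_i$. The substance of the lemma therefore lies in the converse, and my strategy is to show that the cylinder projection $p:I(X,Y)\to(X,Y)$ admits a section which is an $I$-homotopy inverse, exploiting the multiplicative structure of a Voevodsky interval.

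The first step is to produce the section. Using $i_0:1\to I$ and the canonical identifications $X\cong X\times 1$, $Y\cong Y\times 1$, set $s\df(\mathrm{id}_X\times i_0,\mathrm{id}_Y\times i_0):(X,Y)\to I(X,Y)$. The functoriality of $(X,Y)\leadsto(X\times I,Y\times I)$ on $\cC^\square$ assumed in Definition \ref{interval} guarantees $s$ is a morphism of pairs, and the identity $q\circ i_0=\mathrm{id}_1$ (where $q:I\to 1$ is the terminal map) gives $p\circ s=\mathrm{id}_{(X,Y)}$. Consequently $p_i\circ s_i=\mathrm{id}$, and the whole argument reduces to proving $(s\circ p)_i=\mathrm{id}_{H_i(I(X,Y))}$.

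The second step is to exhibit an explicit $I$-homotopy $\mathrm{id}_{I(X,Y)}\approx_I s\circ p$ built from $m:I\times I\to I$. I take $\xi\df(\mathrm{id}_X\times m,\mathrm{id}_Y\times m):I(I(X,Y))\to I(X,Y)$ and let $\varsigma,\sigma:I(X,Y)\to I(I(X,Y))$ be the maps induced on the outer factor by $i_1$ and $i_0$ respectively. By construction $p\varsigma=p\sigma=\mathrm{id}$, whereas the Voevodsky identities $m(\mathrm{id}\times i_1)=\mathrm{id}$ and $m(\mathrm{id}\times i_0)=i_0 q$ give $\xi\circ\varsigma=\mathrm{id}_{I(X,Y)}$ and $\xi\circ\sigma=s\circ p$. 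This is exactly the data demanded by Definition \ref{Ihomotopic}, so $I$-homotopy invariance yields $s_i\circ p_i=\mathrm{id}$ and, combined with the previous step, shows $p_i$ is an isomorphism.

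The main obstacle I anticipate is purely bookkeeping: confirming that $s$, $\xi$, $\varsigma$, $\sigma$ all live in $\cC^\square$, i.e.\ that the underlying squares commute and that the requisite morphisms are distinguished. This is not substantive, since the functoriality hypothesis in Definition \ref{interval} transports $i_0$, $i_1$, $m$ to the pair level automatically, so the Voevodsky identities at the level of $\cC$ upgrade verbatim to identities at the level of $\cC^\square$.
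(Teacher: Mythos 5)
Your proof is correct and is exactly the standard argument the paper invokes by citing \cite[Lemma 3.8.1]{BV}: one direction is Lemma \ref{homotopy} with $\cC=\cD$, and the converse uses the multiplication $m$ of the Voevodsky interval to exhibit $s\circ p\approx_I \mathrm{id}_{I(X,Y)}$, making $p$ an $I$-homotopy equivalence. So you have simply written out in detail the same approach the paper leaves to the reference; the bookkeeping worry is harmless, since morphisms of $\cC^\square$ need only be commutative squares of $\cC$, not componentwise distinguished.
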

\begin{proof} It is a standard argument, \cf \cite[Lemma 3.8.1]{BV}.
\end{proof}
 \begin{propose}\label{hominv}
If $\cC^\square\into \cD^\square$ with a cylinder functor $I$ on $\cD^\square$ then $\Hom_{I-\rm hi} (\cC^{\square}, - )$ is representable, \ie there is a universal $I$-homotopy invariant relative homology  
  $H = \{ H_i\}_{i\in \Z} :\cC^{\square}\to \cA_\partial^{I-\rm hi}(\cC)$
with values in $ \cA_\partial^{I-\rm hi}(\cC)$ a quotient of $ \cA_\partial(\cC)$. Moreover, $\Hom_{I} (\cD^{\square}, - )$ is representable; if $\cC=\cD$ then there is a universal $I$-invariant relative homology  
$H = \{ H_i\}_{i\in \Z} :\cC^{\square}\to \cA_\partial^{I}(\cC)$
with values in $ \cA_\partial^{I}(\cC)$ a quotient of $\cA_\partial^{I-\rm hi}(\cC)$. If  $I\in \cC$ is an interval in the sense of Voevodsky then 
$\cA_\partial^{I-\rm hi}(\cC)\cong\cA_\partial^I(\cC)$.
\end{propose}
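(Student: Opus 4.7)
The plan is to construct each universal abelian category as the Serre quotient of the preceding one by the subcategory of objects that must vanish in order to enforce the relevant axiom; representability then follows formally from the universal property of Serre quotients combined with the universal property of $\cA_\partial(\cC)$ from \cite[Thm.\,2.2.4]{UCTI}.

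First I would handle $I$-homotopy invariance. For each $I$-homotopic pair $\gamma \approx_I \delta : (X,Y) \to (X',Y')$ in $\cC^\square$ and each $i \in \Z$, the difference $H_i(\gamma) - H_i(\delta)$ is a well-defined morphism in $\cA_\partial(\cC)$. Let $\cS_{\rm hi}$ be the Serre subcategory of $\cA_\partial(\cC)$ generated by $\im(H_i(\gamma) - H_i(\delta))$ as $(\gamma, \delta, i)$ varies over all such data, and set $\cA_\partial^{I-\rm hi}(\cC) := \cA_\partial(\cC)/\cS_{\rm hi}$. Any relative homology $H' \in \Hom(\cC^\square, \cA)$ corresponds, by \cite[Thm.\,2.2.4]{UCTI}, to a unique (up to natural isomorphism) exact functor $r_{H'} : \cA_\partial(\cC) \to \cA$, and $r_{H'}$ annihilates $\cS_{\rm hi}$ iff $H'_i(\gamma) = H'_i(\delta)$ for every $I$-homotopic pair, that is, iff $H'$ is $I$-homotopy invariant. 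Hence $\cA_\partial^{I-\rm hi}(\cC)$ represents $\Hom_{I-\rm hi}(\cC^\square, -)$.

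The representability of $\Hom_I(\cD^\square, -)$ is entirely parallel, applied to $\cA_\partial(\cD)$: the Serre subcategory $\cS_I$ is now generated by $\ker(p_i)$ and $\coker(p_i)$ for all $p_i : H_i(I(X,Y)) \to H_i(X,Y)$ over $(X,Y) \in \cD^\square$ and $i \in \Z$, since an exact functor sends $p_i$ to an isomorphism iff it kills both the kernel and the cokernel of $p_i$. When $\cC = \cD$, Lemma \ref{homotopy} yields the inclusion of subfunctors $\Hom_I(\cC^\square, -) \subseteq \Hom_{I-\rm hi}(\cC^\square, -)$, so by composing the two quotient functors one obtains $\cA_\partial(\cC) \onto \cA_\partial^{I-\rm hi}(\cC) \onto \cA_\partial^I(\cC)$. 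Finally, if $I \in \cC$ is an interval in the sense of Voevodsky, Lemma \ref{hinv=hi} identifies the two subfunctors and therefore their representing objects coincide canonically, giving $\cA_\partial^{I-\rm hi}(\cC) \cong \cA_\partial^I(\cC)$.

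The only step requiring genuine attention is the formal fact that the Serre quotient of $\cA_\partial(\cC)$ by a subcategory generated by a specified family of objects really does represent the subfunctor of relative homologies satisfying the corresponding axiom; this rests on the universal property of Serre quotients of abelian categories together with the naturality in $(X,Y)$ of the generating relations, and I expect no substantive obstacle beyond careful bookkeeping.
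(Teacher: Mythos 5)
Your construction coincides with the paper's: the paper defines $\cA_\partial^{I-\rm hi}(\cC)$ as the quotient of $\cA_\partial(\cC)$ by the thick (Serre) subcategory generated by $\im(\gamma_i-\delta_i)$ for $I$-homotopic pairs, defines $\cA_\partial^{I}(\cD)$ via the kernels and cokernels of the $p_i$, and deduces the remaining claims from Lemmas \ref{homotopy} and \ref{hinv=hi}, exactly as you do. Your proposal is correct and takes essentially the same route, only spelling out the appeal to the universal property of the Serre quotient that the paper leaves implicit.
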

\begin{proof}
The category $ \cA_\partial^{I-\rm hi}(\cC)$  is the quotient of  $ \cA_\partial(\cC)$ by the thick subcategory generated by $\im (\gamma_i-  \delta_i)$  for $\gamma_i, \delta_i: H_i (X, Y)\to H_i  (X', Y')$ in $\cA_\partial(\cC)$ all parallel $I$-homotopic morphisms  $\gamma \approx_I \delta: (X,Y)\to (X',Y')$  and $i \in \Z$. The category $ \cA_\partial^{I}(\cD)$  is the quotient of  $ \cA_\partial(\cD)$ by the thick subcategory generated by  the kernels and cokernels of $p_i: H_i (I (X, Y))\to H_i  (X, Y)$ for $i\in \Z$. For $\cC = \cD$ the other claims follow from Lemmas \ref{homotopy}- \ref{hinv=hi}.
 \end{proof}
 As usual, we have that a morphism $\gamma : (X,Y)\to (X',Y')$ is an $I$-homotopy equivalence if there is $\delta: (X',Y')\to (X,Y)$ such that $\gamma  \delta \approx_I id_{(X',Y')}$ and  $\delta\gamma   \approx_I id_{(X,Y)}$. This implies that $\gamma_i :  H_i (X, Y)\by{\simeq} H_i  (X', Y')$ is an isomorphism for any $H\in \Hom_{I-\rm hi} (\cC^\square, \cA)$ homotopy invariant homology. 
 \begin{defn}\label{univgenexc}
Say that $\gamma :  (X,Y)\to (X',Y')\in \cC^\square$ is  \emph{universally homologically invertible} if  $\gamma_i :  H_i (X, Y)\by{\simeq} H_i  (X', Y')$ is invertible in $\cA_\partial (\cC)$ where $H : \cC^\square \to  \cA_\partial (\cC)$ is the universal relative homology. Say that $\gamma :  (X,Y)\to (X',Y')\in \cC^\square$ is \emph{universally $I$-homotopically invertible} if  $\gamma_i :  H_i (X, Y)\by{\simeq} H_i  (X', Y')$ is invertible in $\cA_\partial^{I-\rm hi}(\cC)$ where $H : \cC^\square \to  \cA_\partial^{I-\rm hi} (\cC)$ is the universal $I$-homotopy invariant homology. 
For $\cC^\square$ endowed with a collection $\Xi$ of excisive squares we say that a morphism $\eta : (X, Y)\to (X',Y')\in \cC^\square$ is a \emph{$(\Xi , I)$-generalised excision} if $\eta$ is a finite composition of excisions squares in $\Xi$ and universally $I$-homotopically invertible morphisms.
\end{defn}
A stronger notion of generalised excision is considered in \cite[Chap.\,IV, Def. 9.2]{ESF}
Clearly,  $\gamma$ is universally homologically (resp.\,homotopically) invertible if and only if  $\gamma_i :  H_i (X, Y)\by{\simeq} H_i  (X', Y')$ is invertible in $\cA$ for $H : \cC^\square \to  \cA$ any relative (resp.\,homotopi\-cally invariant) homology. For example, an $I$-homotopy equivalence is universally $I$-homotopically invertible.
\begin{remark}\label{invtriv}
Note that every category $\cC$ with a final object $1$ is endowed with the trivial Voevodsky interval object $I =1$  where $\iota = \nu =id_{1}$ in such a way that $(X, Y) \leadsto 1(X, Y) = (X,Y) : \cC^\square \to \cC^\square$ is the identity. For the interval $1$, since $\varsigma= \sigma =id_{1}$, we have that  $\gamma \approx_1 \delta$ if and only if  $\gamma = \delta$. 
Therefore, a $1$-homotopy equivalence is an isomorphism of $\cC^\square$, all relative homologies are $1$-invariant, universal $1$-homotopical homology coincide with universal relative homology and we have $\cA_\partial (\cC)\cong\cA_\partial^{1-\rm hi}(\cC)\cong\cA_\partial^1(\cC)$. 

On the other hand, if $\cC$ is a category with a strictly initial object $0$ then also $I=0$ is an interval where $(X, Y) \leadsto 0(X, Y) = (0,0) : \cC^\square \to \cC^\square$. For the interval $0$ we have that $\gamma$ is never $0$-homotopic to $\delta$, unless $\gamma =\delta :(0,0)\to (X,Y)$ is the null map. The only $0$-homotopic equivalence is $id_{(0,0)}$ and the only $0$-invariant homology is the trivial homology, we have $\cA_\partial (\cC)\cong\cA_\partial^{0-\rm hi}(\cC)$ and $\cA_\partial^0(\cC)$ is the zero category, \ie the point category as an abelian category. 
\end{remark}
\subsection{Ordinary homology} 
We now consider a category $\cC$ along with a distinguished subcategory such that $\cC$ has an initial object $0$ and a set of point objects $*$ (see \cite[Axiom 2.1.6]{UCTI}). Suppose that $(*,0)\df 0\to *$ is distinguished. We suppose that $\cC^\square$ is equipped with a collection $\Xi$ of excisive squares, see Definition \ref{excisive} and a cylinder functor $I$ on 
$\cD^\square$ for some fixed embedding $\cC^\square\into \cD^\square$, \eg $\cC=\cD$ and the cylinder functor is induced by an interval object $I\in \cC$, see Definition \ref{interval}. Denote $(\cC^\square,  \Xi, I)$ this set of data and conditions. 
\begin{defn}\label{ES}
Say that $(\cC^\square,  \Xi, I)$ is an \emph{homological structure} on $\cC$ or that $\cC$ is a homological category for short. 
Say that a relative homology on a homological category is \emph{ordinary} if satisfies the excision Axiom \ref{excisax}, the homotopy $I$-invariance Axiom \ref{hinvax} and the point axiom \cite[Axiom 2.3.2]{UCTI}. 
Say that a relative homology is \emph{extraordinary} if satisfies the previous axioms but the point axiom.

An  \emph{additive ordinary/extraordinary} homology on $\cC$ with (small) coproducts is an ordinary or extraordinary homology with values in a Grothendieck category satisfiyng  additivity \cite[Axiom 2.3.8]{UCTI}.

Denote 
$\Hom_{\rm extra} (\cC^{\square}, - )$ and $\Hom_{\rm ord} (\cC^{\square})$ (resp.\/ 
$\Hom_{\rm Extra} (\cC^{\square}, - )$ and $\Hom_{\rm Ord} (\cC^{\square}, -)$) the corresponding sub-functors of $\Hom (\cC^{\square}, -)$ (resp.\/  $\Hom_{\rm Add} (\cC^{\square}, - )$, \cf \cite[Prop.\,2.3.9]{UCTI}). 
\end{defn}
Ordinary homology corresponds to Eilenberg-Steenrod axiomatization \cite{ESF} and extraordinary to Whitehead \cite{W} generalized homologies; additivity corresponds to the wedge axiom \cite{M}. We stress out that an  homological category is somewhat a reformulation of an h-category in \cite[Chap.\,IV, Def.\,9.1]{ESF}. There is a corresponding stronger notion of site with interval introduced by Voevodsky \cite{Vh}: every homological structure $(\cC^\square,  \Xi, I)$ on a category $\cC$ with finite limits and an interval $I$ in the sense of Voevodsky yields a site with an interval by \cite{Vcd}.

Normally one would like to restrict to homologies concentrated in non-negative degrees: this is clearly possible but unnecessary, \cf \cite[Remark 2.2.5]{UCTI}. 

Note that by Proposition \ref{MV} we have Mayer-Vietoris sequences for excisive squares and by Proposition \ref{exprod} we get additivity if the coproduct exists and it is excisive.  If $I\in \cC$ is an interval in the sense of Voevodsky, by Lemmas \ref{homotopy}-\ref{hinv=hi}, we have that $I$-invariance, see Axiom \ref{Invax}, is equivalent to $I$-homotopy invariance, see Axiom \ref{hinvax}. However, one could remove or consider decorated forms of homotopy invariance. We may christen non homotopy invariant homologies the so obtained homologies without any reasonable form of homotopy invariance but the trivial one $I=1$.

\begin{thm}[Universal ordinary and extraordinary homology]\label{univES}
Let $(\cC^\square, \Xi, I)$ be an homological category. The 2-functors $\Hom_{\rm extra} (\cC^\square, - )$ and $\Hom_{\rm ord} (\cC^\square, - )$ are representable, \ie the universal extraordinary homology $H  :\cC^\square\to \cA^{\rm extra}_\partial(\cC)$  exists, is the image of the universal relative homology under the projection along a quotient $\cA_\partial(\cC)\onto \cA^{\rm extra}_\partial(\cC)$ and the universal ordinary homology $H :\cC^\square\to \cA^{\rm ord}_\partial(\cC)$ exists, being induced by a further quotient $\cA^{\rm extra}_\partial(\cC)\onto  \cA^{\rm ord}_\partial(\cC)$. 

Furthermore, for $\cC$ with (small) coproducts, $\Hom_{\rm Extra} (\cC^{\square}, - )$ and $\Hom_{\rm Ord} (\cC^{\square}, -)$ are representable by $H :\cC^\square\to \cA^{\rm Extra}_\partial(\cC)$ and $H :\cC^\square\to \cA^{\rm Ord}_\partial(\cC)$ where $\cA^{\rm Extra}_\partial(\cC)$ and  $\cA^{\rm Ord}_\partial(\cC)$ are Grothendieck quotients of $\Ind \cA^{\rm extra}_\partial(\cC)$ and $\Ind \cA^{\rm ord}_\partial(\cC)$, respectively.
\end{thm}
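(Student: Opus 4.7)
The plan is to imitate and iterate the Serre-quotient construction already used in Proposition \ref{hominv} and in \cite[Prop.\,2.1.7, 2.3.3]{UCTI}, starting from the universal relative homology $H:\cC^\square\to\cA_\partial(\cC)$ of \cite[Thm.\,2.2.4]{UCTI} and passing to successively smaller quotients, each one designed to encode one more axiom. The key underlying mechanism is that $\cA_\partial(\cC)$ is an essentially small abelian $R$-linear category whose exact functors to any $R$-linear abelian $\cA$ correspond bijectively to relative homologies with values in $\cA$; consequently any extra axiom on $H'\in\Hom(\cC^\square,\cA)$ translates into the condition that the classifying exact functor $r_{H'}$ kills a specified set of objects of $\cA_\partial(\cC)$, and hence factors uniquely through the Serre quotient by the thick subcategory those objects generate.

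For the extraordinary case, I define $\cA^{\rm extra}_\partial(\cC)$ as the quotient of $\cA_\partial(\cC)$ by the thick subcategory $\cT_{\rm extra}$ generated by the objects $\ker(\epsilon_i)$ and $\coker(\epsilon_i)$ for every excisive square $\epsilon\in\cE_W$ and every $i\in\Z$, together with $\im(\gamma_i-\delta_i)$ for every pair of $I$-homotopic morphisms $\gamma\approx_I\delta$ in $\cC^\square$ and every $i\in\Z$ (and, when $\cC=\cD$, the kernels and cokernels of the $p_i$, by Lemmas \ref{homotopy}--\ref{hinv=hi}, which reduces to the quotient already constructed in Proposition \ref{hominv} composed with the excision quotient). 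An exact functor $r:\cA_\partial(\cC)\to\cA$ sends $\cT_{\rm extra}$ to zero if and only if every $\epsilon_i$ becomes an isomorphism in $\cA$ and every $\gamma_i-\delta_i$ vanishes, i.e.\ if and only if the corresponding relative homology satisfies Axioms \ref{excisax} and \ref{hinvax}. Thus $\cA^{\rm extra}_\partial(\cC)$ together with the composite $H:\cC^\square\to\cA_\partial(\cC)\onto\cA^{\rm extra}_\partial(\cC)$ represents $\Hom_{\rm extra}(\cC^\square,-)$.

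For the ordinary case, I further take $\cA^{\rm ord}_\partial(\cC)$ to be the Serre quotient of $\cA^{\rm extra}_\partial(\cC)$ by the image of the thick subcategory encoding the point axiom, exactly as in \cite[Axiom 2.3.2, Prop.\,2.3.3]{UCTI}. Equivalently, $\cA^{\rm ord}_\partial(\cC)$ is the quotient of $\cA_\partial(\cC)$ by the thick subcategory generated by all three families of relations, so the factorization $\cA_\partial(\cC)\onto\cA^{\rm extra}_\partial(\cC)\onto\cA^{\rm ord}_\partial(\cC)$ is automatic, and the same universal-property argument as above identifies the second composite with a representing object for $\Hom_{\rm ord}(\cC^\square,-)$.

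For the Grothendieck versions I first pass to the Ind-completions $\Ind\cA^{\rm extra}_\partial(\cC)$ and $\Ind\cA^{\rm ord}_\partial(\cC)$, which are locally finitely presented Grothendieck categories, and then impose additivity by Gabriel-localizing at the localizing subcategory generated by the kernels and cokernels of the canonical wedge maps $\bigoplus_\alpha H_i(X_\alpha,Y_\alpha)\to H_i(\coprod_\alpha X_\alpha,\coprod_\alpha Y_\alpha)$ for arbitrary small families in $\cC^\square$, following the pattern of \cite[Prop.\,2.3.9]{UCTI}. The main obstacle is precisely this last step: the wedge axiom cannot be imposed inside the essentially small quotient (which typically has no small coproducts), so one must pass to Ind and then check both that the wedge generators assemble into a genuine localizing subcategory and that Gabriel localization is compatible with the $2$-categorical universal property of Ind-completion, so that exact cocontinuous functors out of $\cA^{\rm Ord}_\partial(\cC)$ correspond to additive ordinary homologies with values in an arbitrary Grothendieck target. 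The rest of the argument is routine bookkeeping of which generating relations survive under which quotient.
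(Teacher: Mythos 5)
Your proposal is correct and follows essentially the same route as the paper: iterated Serre quotients of $\cA_\partial(\cC)$ by the thick subcategory generated by $\im(\gamma_i-\delta_i)$ for $I$-homotopic pairs, by $\ker\epsilon_i$ and $\coker\epsilon_i$ for excisive squares, and then by the point-axiom objects, followed by Ind-completion and a Gabriel (Grothendieck) quotient at the wedge maps as in \cite[Prop.\,2.3.9]{UCTI} and \cite[Cor.\,2.2.6]{UCTI}. The only caveat is your parenthetical adding the kernels and cokernels of the $p_i$ when $\cC=\cD$: that imposes $I$-invariance, which by Lemmas \ref{homotopy} and \ref{hinv=hi} coincides with the $I$-homotopy invariance required in Definition \ref{ES} only when $I$ is a Voevodsky interval, so in general those generators should be omitted (as the paper does, working with $\cA_\partial^{I-\rm hi}(\cC)$ from Proposition \ref{hominv}).
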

\begin{proof} Let $H\in \Hom (\cC^\square, \cA_\partial^{I-\rm hi}(\cC))$ be the universal $I$-homotopy invariant relative homology of Proposition \ref{hominv}, induced by 
\cite[Thm.\,2.2.4]{UCTI}, \ie the image of the universal relative homology along the quotient $\cA_\partial (\cC)\onto \cA_\partial^{I-\rm hi}(\cC)$.
Define  $\cA^{\rm extra}_\partial(\cC)$ to be the further quotient of $ \cA_\partial^{I-\rm hi}(\cC)$ by the thick subcategory generated by the objects $\ker \epsilon_i$ and $\coker \epsilon_i$ for  $\epsilon_i : H_i (Y,Z)\to H_i  (W, X)$ induced by $\epsilon : (Y, Z)\to (W, X)$ all excisive squares in $\Xi$  and $i \in \Z$.

 Adapting the arguments in the proof of \cite[Thm.\,2.2.4]{UCTI} we see that for an extraordinary homology $K\in \Hom_{\rm extra} (\cC^\square, \cA)$ the induced exact functor $r_K: \cA_\partial(\cC)\to \cA$ factors through $\cA^{\rm extra}_\partial(\cC)$ yielding $\Hom_{\rm extra}(\cC^\square,\cA)\cong \Ex_R ( \cA_\partial^{\rm extra}(\cC), \cA)$. Now define $\cA^{\rm ord}_\partial(\cC)$ to be the quotient of $ \cA_\partial^{\rm extra}(\cC)$ by the thick subcategory generated by $H_i(*,0 )\in  \cA_\partial^{\rm extra}(\cC)$ for $i\neq 0$ and get $\Hom_{\rm ord}(\cC^\square,\cA)\cong \Ex_R ( \cA_\partial^{\rm ord}(\cC), \cA)$ as claimed. 
 
For $\cC$ with arbitrary coproducts, a Grothendieck category $\cA$ and $K\in \Hom_{\rm Ord} (\cC^\square, \cA)$ we obtain that $r_K$ lifts to a unique $\Ind \cA^{\rm ord}_\partial(\cC)\to \cA$, such that  the image $H :\cC^\square\to  \Ind \cA^{\rm ord}_\partial(\cC)$ of the universal ordinary homology under $\cA^{\rm ord}_\partial(\cC)\into\Ind \cA^{\rm ord}_\partial(\cC)$ is universal with respect to ordinary homologies in Grothendieck categories, see \cite[Cor.\,2.2.6]{UCTI}. 
Define $\cA^{\rm Ord}_\partial(\cC)$ as the Grothendieck quotient of $\Ind \cA^{\rm ord}_\partial(\cC)$ by the thick and localizing subcategory generated by the kernels and cokernels of $$``\bigoplus_{k\in I}{ } " H_i (X_k)\to H_i (\coprod_{k\in I} X_k)$$ 
and, by the same argument in the proof of \cite[Prop.\,2.3.9]{UCTI}, we see that the functor $\Ind \cA^{\rm ord}_\partial(\cC)\to \cA$ factors through $\cA^{\rm Ord}_\partial(\cC)$. The category $\cA^{\rm Extra}_\partial(\cC)$ can be obtained similarly by removing the point axiom. 
\end{proof}
The point category $\cC ={\bf 1}$ is the minimal non-empty category with a unique homological structure for which we have that $\cA_\partial({\bf 1})=\cA^{\rm extra}_\partial({\bf 1})= \cA^{\rm ord}_\partial({\bf 1})$ are all zero categories. 
\begin{example}[Additive and abelian categories]  For a category $\cC$ with a zero point object $1=0$ the point axiom is trivial and $\cA^{\rm extra}_\partial(\cC)= \cA^{\rm ord}_\partial(\cC)$. If $\cC=\cR$ is additive we have the homological additive structure $(\cR^\square,  \oplus , 1)$ where we may take monos as distinguished, the biproduct excisive squares and $I=1$. Then an ordinary homology on $(\cR^\square,  \oplus , 1)$ is a generalisation of a Grothendieck homological functor on the additive category $\cR$ (see \cite[\S 3]{UCTI}); if $\cC=\cA$ is abelian then the homological structure  $(\cA^\square,  \Xi_\partial , 1)$ where $\Xi_\partial$ are the excisive squares of Example \ref{exalb} yields exactly Grothendieck homological functors, \ie $\partial$-homologies, as ordinary homologies on  
$(\cA^\square,  \Xi_\partial , 1)$ and  $\cA^\partial\cong  \cA^{\rm ord}_\partial (\cA)$ of Theorem \ref{univES} and \cite[Thm.\,3.1.3]{UCTI} coincide.  Note that there is also the abelian category of cyclic objects in an abelian category that should be properly investigated. 
\end{example}
\begin{example}[Initial and final homological category]\label{pointord} 
Any category $\cC$ with a strict initial object $0$ and a final point object $1$ is a homological category in at least two opposite ways: with the initial (or trivial or coarse) homological structure and/or with the final (or finest or discrete) homological structure. The initial structure is given by taking the image of the canonical functor ${\bf 2}\to \cC$ (it is sending $0$ to $0$ and $1$ to $1$) to be the distinguished subcategory of $\cC$ so that $\cC^\square = {\bf 2}^\square$, together with the trivial interval $I=1$ and the initial excisive squares $\Xi= {\rm Iso}$. For the initial homological structure $({\bf 2}^\square, {\rm Iso}, 1)$ we have that ordinary homologies are relative homologies on ${\bf 2}^\square$ satisfying the point axiom: as explained in \cite[Example 2.3.4]{UCTI}, we have $\cA^{\rm ord}_\partial(\cC)\cong \iRmod$. If we allow a larger distinguished subcategory, for $(\cC^\square, {\rm Iso}, 1)$ we get $\cA^{\rm ord}_\partial(\cC)\cong \cA^{\rm point}_\partial(\cC)$ and $\cA^{\rm extra}_\partial(\cC)\cong  \cA_\partial(\cC)$. 
 
Moreover, we always can get an homological structure on any $\cC$ such that $\cA^{\rm ord}_\partial(\cC)$ is the zero category, \eg by Remarks \ref{exctriv} and \ref{invtriv}. The final structure is given by taking all $\cC$ to be a distinguished subcategory so that $\cC^\square = \cC^{\bf 2}$, the final excisive squares $\Xi= \cC^{\bf 2}$ and any interval, \eg $I=0$ or $I=1$. Whence $\cA^{\rm ord}_\partial(\cC)=\cA^{\rm extra}_\partial(\cC)$ is zero, even if we allow a smaller distinguished subcategory and any cylinder functor, \ie for $(\cC^\square, \cC^\square, I)$.
\end{example} 
\begin{defn}\label{homdef}
Let $(\cC^\square, \Xi, I)$ and $(\cD^\square, \Sigma, J)$ be two homological categories. An \emph{homological functor} $F: (\cC^\square, \Xi, I)\to (\cD^\square, \Sigma, J)$ is a functor $F: \cC\to \cD$ inducing a functor $F^\square: \cC^\square\to \cD^\square$ which is compatible with generalised excisive squares and homotopies, see Definitions \ref{Ihomotopic} - \ref{univgenexc}, in the following sense: 
\begin{itemize}
\item[{\it i)}] $F^\square(\eta)$ is a $(\Sigma,J)$-generalised excision if $\eta$ is a $(\Xi ,I)$-generalised excision, 
\item[{\it ii)}] $F^\square(\gamma ) \approx_J F^\square(\delta)$ are $J$-homotopic morphims of $\cD^\square$ if $\gamma \approx_I \delta$ are two $I$-homotopic morphims of $\cC^\square$ and, additionally, 
\item[{\it iii)}]  the homological functor is \emph{ordinary} if it is also compatible with with point objects, \ie $F^\square(*,0)=(*,0)$.
\end{itemize}
 
An homological functor between categories with arbitrary coproducts is \emph{additive} if it preserves coproducts. 
\end{defn}
An ordinary homological functor is the analogue of an h-functor in 
\cite[Chap.\,IV, Def.\,9.4]{ESF}.
An ordinary homological functor induces a functor, \cf \cite[Chap.\,IV, Thm.\,9.5]{ESF}
$$H\leadsto HF^\square: \Hom_{\rm ord}(\cD^\square,\cA)\to \Hom_{\rm ord}(\cC^\square,\cA)$$
by restriction along $F^\square$ and this is natural as $\cA$ varies.

Consider $HF^\square\in  \Hom_{\rm ord}(\cC^\square,\cA^{\rm ord}_\partial(\cD))$ given by $H\in \Hom_{\rm ord}(\cD^\square,\cA^{\rm ord}_\partial(\cD))$ the universal homology on $\cD^\square$. By the universality of  $\cA^{\rm ord}_\partial(\cC)$ the latter homology $HF^\square$ yields an $R$-linear exact functor $$F_\partial \df r_{HF^\square}: \cA^{\rm ord}_\partial(\cC)\to \cA^{\rm ord}_\partial(\cD)$$ and, similarly, we get $F_\partial: \cA^{\rm Ord}_\partial(\cC)\to \cA^{\rm Ord}_\partial(\cD)$ in the additive case or $F_\partial: \cA^{\rm extra}_\partial(\cC)\to \cA^{\rm extra}_\partial(\cD)$ for any homological functor.
\begin{defn}\label{homeqdef}
An homological functor $F: (\cC^\square, \Xi, I)\to (\cD^\square, \Sigma, J)$ is an \emph{homological equivalence} if $F_\partial: \cA^{\rm extra}_\partial(\cC)\longby{\simeq} \cA^{\rm extra}_\partial(\cD)$ is an equivalence; $F$ ordinary (resp.\/ additive) is an homological equivalence if $F_\partial: \cA^{\rm ord}_\partial(\cC)\longby{\simeq} \cA^{\rm ord}_\partial(\cD)$ (resp.\/ $F_\partial: \cA^{\rm Ord}_\partial(\cC)\longby{\simeq} \cA^{\rm Ord}_\partial(\cD)$) is an equivalence. 
\end{defn}

Clearly, there is a canonical sufficient condition to obtain compatibility with homotopies for a functor between categories endowed with cylinder functors: that $F^\square$ is commuting with the cylinder functors, \ie $F^\square I= JF^\square$ and $F^\square(p)=p_{F^\square}$. However, this stronger condition is not necessary. 
In fact, for any homological category $(\cC^\square, \Xi, I)$ the identity functor induces an homological functor from the initial homological structure $(\cC^\square, {\rm Iso}, 1)\to (\cC^\square, \Xi, I)$ 
which is not, in general, commuting with the cylinder functor. Moreover, the identity functor also induces an ordinary homological functor $(\cC^\square, \Xi, I)\to (\cC^\square, \cC^\square, I)$ to the final homological structure, \cf Example \ref{pointord}. These functors induce the projection $\cA^{\rm point}_\partial(\cC)\onto \cA^{\rm ord}_\partial(\cC)$ and the zero functor, respectively.
For $\cC$ with a strict initial object $0$ and a final point object $1$ we also have a unique ordinary homological functor 
 $$!: ({\bf 2}^\square, {\rm Iso}, 1)\to (\cC^\square, \Xi, I)$$
and the trivial functor $\text{!`} : (\cC^\square, \Xi, I)\to ({\bf 1}, {\bf 1}, 0)$. The homological functor $!$ is then inducing the functor  $$!_\partial :  \iRmod\to \cA^{\rm ord}_\partial(\cC) \ \ \ |R|\leadsto H_0(1) $$
which is also the composition of the inclusion $!_{\rm point}: \iRmod\into \cA^{\rm point}_\partial(\cC)$ and the projection $\cA^{\rm point}_\partial(\cC)\onto \cA^{\rm ord}_\partial(\cC)$, compatibly with \cite[Prop.\,2.3.6]{UCTI}. This functor uniquely extends
$$!_\partial :  \iRMod\to \cA^{\rm Ord}_\partial(\cC)$$
along the canonical exact $R$-linear functor $\cA^{\rm ord}_\partial(\cC) \to \cA^{\rm Ord}_\partial(\cC)$ such that $|R|\leadsto H_0(1)$. 
\begin{defn}\label{coeff} 
Let $(\cC^\square, \Xi, I)$ be a homological category  with a strict initial object $0$ and a final point object $1$.
For $H$ any ordinary homology we say that $H_0(1)$ is the \emph{coefficient object} of the homology; if $H$ is extraordinary $H_i(1)$ are the coefficients. For a distinguished global point $1\to X$ the \emph{reduced} homology is 
$$\tilde{H}_i(X)\df H_i(X, 1)\cong H_i(X)/H_i(1)\cong \ker H_i(X)\to H_i(1)$$ 

Let $H :\cC^\square\to \cA^{\rm ord}_\partial(\cC)$ (resp.\/ $H :\cC^\square\to \cA^{\rm Ord}_\partial(\cC)$) be the universal (resp.\/ additive) ordinary homology. 
Denote $H_0(1) \df \underline{R}\in \cA^{\rm ord}_\partial(\cC)$ (resp.\/ $\underline{R}\in\cA^{\rm Ord}_\partial(\cC)$) such that $!_\partial (|R|)= \underline{R}$ for $|R|\in \iRmod$ the universal hieratic $R$-module (see \cite[Def.\,1.3.3]{UCTI}). Say that $\underline{R}$ is the \emph{universal coefficient object} of the ordinary homology theory with respect to the homological structure: if $\underline{R}= 0$ we say that the homological structure is trivial.
\end{defn}
For  any relative homology $H$ and $1\to Y\to X$ distinguished we obtain a long exact sequence
$$\cdots \to \tilde{H}_i(Y)\to \tilde{H}_i(X)\to H_i(X, Y)\to \tilde{H}_{i}(Y)\to \cdots$$
and if $H$ is ordinary we have $H_i(X)\cong  \tilde{H}_i(X)$ for $i\neq 0$ and  $H_0(X)\cong  \tilde{H}_0(X)\oplus H_0(1)$. 
\begin{propose}
If there is an ordinary homology $H'  :\cC^\square\to \cA$ with a non trivial coefficient object $H_0'(1)\neq 0$ then the homological structure is not trivial. 
\end{propose}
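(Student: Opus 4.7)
The plan is to argue by contrapositive using the universal property of ordinary homology established in Theorem \ref{univES}. Suppose the homological structure is trivial, meaning $\underline{R}=H_0(1)=0$ in $\cA^{\rm ord}_\partial(\cC)$ where $H:\cC^\square\to \cA^{\rm ord}_\partial(\cC)$ is the universal ordinary homology. I want to deduce that for every ordinary homology $H':\cC^\square\to \cA$ we have $H'_0(1)=0$, contradicting the hypothesis.

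The key step uses the representability from Theorem \ref{univES}: the given $H'\in \Hom_{\rm ord}(\cC^\square,\cA)$ is classified by an $R$-linear exact functor $r_{H'}:\cA^{\rm ord}_\partial(\cC)\to \cA$ with $r_{H'}\circ H = H'$. Evaluating this natural isomorphism at the object $1\in \cC$ in degree $0$, one gets
$$H'_0(1) \;=\; r_{H'}(H_0(1)) \;=\; r_{H'}(\underline{R}).$$
Since any additive functor preserves zero objects, the assumed vanishing $\underline{R}=0$ forces $H'_0(1)=r_{H'}(0)=0$, contradicting the hypothesis $H'_0(1)\neq 0$. Therefore $\underline{R}\neq 0$, and by Definition \ref{coeff} the homological structure is not trivial.

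The argument is essentially a one-liner: the universality of $H$ transfers non-triviality of the coefficient object of \emph{any} particular ordinary homology back to non-triviality of the universal coefficient object $\underline{R}$. There is no real obstacle; the only thing to check is that the functor $r_{H'}$ provided by Theorem \ref{univES} is honestly $R$-linear additive (so it kills zero objects), which is part of the statement of that theorem.
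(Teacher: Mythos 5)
Your proposal is correct and is essentially the paper's own argument: both use the classifying exact functor $r_{H'}$ from Theorem \ref{univES}, the identity $r_{H'}(\underline{R})=H_0'(1)$, and the fact that an additive (exact) functor kills zero objects, the only cosmetic difference being that you phrase it contrapositively while the paper argues directly that $r_{H'}(\underline{R})=A\neq 0$ forces $\underline{R}\neq 0$.
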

\begin{proof}
In fact, consider $r_{H'}: \cA^{\rm ord}_\partial(\cC) \to \cA $ with $H_0'(1) \df A \neq 0$. The universal object $\underline{R}\in \cA^{\rm ord}_\partial(\cC)$ is such that $r_{H'}(\underline{R})= A$ and therefore $\underline{R}\neq 0$ is not trivial. 
\end{proof}
If $H'$ is an ordinary homology $H': \cC^\square\to \cA$ such that $H_0'(1) \df A \in \cA$, $A\neq 0$,  then $r_A= r_{H'} !_\partial : \iRmod\to \cA$ is not the zero functor. 
The projection $$\cA^{\rm ord}_\partial(\cC)\onto \cA (H')\df \cA^{\rm ord}_\partial(\cC)/\ker r_{H'}$$ and the functors $!_\partial $ and $r_{H'}$ are then inducing a factorisation 
$$\xymatrix{&\cA^{\rm ord}_\partial(\cC)\ar[d]^{}\ar@/^1.2pc/[ddr]^{r_{H'}}&\\
&\cA (H')\ar[dr]^{\bar{r}_{H'}}&\\  
\iRmod\ar@/^1.2pc/[uur]^{!_\partial}\ar[ur]^{ }\ar[rr]_{r_A}& &\cA}$$
such that $\cA (H')\neq 0$, $\bar{r}_{H'}$ is faithful and essentially surjective if $r_A$ is a quotient. For $\cA$ Grothendieck and $H'$ additive we obtain a similar diagram where 
$\cA^{\rm ord}_\partial(\cC)$ is replaced by $\cA^{\rm Ord}_\partial(\cC)$ and $\iRmod$ is replaced by its indization $\iRMod$. 
\begin{remark}[Transfers]\label{trans}
 We clearly can add ``finite coverings'' to the data of an homological structure defining an ordinary homology with transfers following \cite[Remark 2.2.10]{UCTI}. The universal ordinary homology theory with transfers $H^\tr: \cC^\square \to \cA^{\rm ord/\tr}_\partial(\cC)$ exists and it is obtained by representing the functor $\Hom_{\rm ord/\tr} (\cC^\square, - )$ given by ordinary homologies with transfers. In fact, this theory is constructed by considering the universal relative homology with transfers $H^\tr : \cC^\square \to \cA_\partial^\tr(\cC)$ and arguing as in the proof of Theorem \ref{univES}. There is a canonical $R$-linear exact functor $$r_{H^\tr}: \cA^{\rm ord}_\partial(\cC)\to \cA^{\rm ord/\tr}_\partial(\cC)$$ Similarly, arguing with $H^\tr  : \cC^\square \to \Ind \cA^{\rm ord/\tr}_\partial(\cC)$ we obtain $H^\tr  : \cC^\square \to \cA^{\rm Ord/\tr}_\partial(\cC)$ the universal additive ordinary homology with transfers. 
\end{remark}

\section{Universal singular homology}
\subsection{Singular homology}
Let $\SSet\df \Psh (\Delta)$ be the category of simplicial sets. For any Grothendieck category $\cA$ together with a fixed object $A\in\cA$ we obtain a canonical functor
$$C(-, A): \SSet \to \Ch (\cA)$$
whose essential image is contained in the subcategory of ordinary (\ie concentrated in non negative degrees) chain complexes.  The chain complex $C(X_\d, A)$ associated with a simplicial set $X_\d$ is simply the chain complex induced by the following simplicial object 
$$m\to n\in \Delta\leadsto X_n \longby{\varphi} X_m \leadsto \bigoplus_{x\in X_n}{}  A_x \longby{\varphi_A} \bigoplus_{y\in X_m}{}  A_y \in \cA$$ where the direct sums are taken over copies of $A$ and $\varphi_A$ is induced via $\varphi$ by universality of the direct sum. Note that for finite simplicial sets the direct sums are over finite sets and we may avoid the assumption on the abelian category $\cA$ to be Grothendieck. The association 
$X_\d\leadsto C(X_\d, A)$ is clearly functorial. 
Any cosimplicial object $\Im$ in a (locally small) category $\cC$ yields 
$$X\leadsto {\rm Sing}^\Im_\d (X)\df \cC (\Im^\d, X): \cC\to \SSet$$
a singular simplicial set functor: we shall assume that $\Im^0\cong 1$ is a final point object and $\Im^1\cong I$ is a non-trivial interval object. 
For $\cC$ with arbitrary colimits, universality of Yoneda embedding of $\Delta$ in $\SSet$ implies that ${\rm Sing}^\Im_\d $ has a left adjoint, the so called ``geometric realisation'' functor as shown
$$\xymatrix{\Delta\ar[r]^{} \ar[d]_{\Im} & \Psh (\Delta) = \SSet \ar@/^0.8pc/[dl]^{{\rm Re}^\Im} \\ 
\ar@/^0.8pc/[ur]_{{\rm Sing}^\Im}\cC}$$
where ${\rm Re}^\Im (X_\d)$ is the colimit of $\Im^n$ over $\Delta^n\to X_\d$ \ie in such a way that we moreover have $\cC({\rm Re}^\Im X_\d, X)\cong \SSet (X_\d , {\rm Sing}^\Im_\d (X))$ by construction. Setting 
$${\rm Sing}^\Im_A (X)\df C({\rm Sing}^\Im_\d (X), A)$$ and ${\rm Sing}^\Im_A (X, Y)\df {\rm Cone}\ {\rm Sing}^\Im_A (Y)\to  {\rm Sing}^\Im_A (X)$ for pairs $(X, Y)\in \cC^\square$ provide,  by the well known properties of the cone, a well defined functor
${\rm Sing}^\Im_A: \cC^\square\to \Ch (\cA)$
such that
$$(X,Y)\leadsto H^{\rm Sing}_i(X, Y; A)\df H_i({\rm Sing}^\Im_A (X, Y)):\cC^\square\to \cA$$ 
is a relative homology $H^{\rm Sing}(-;A): \cC^\square\to \cA$  which is concentrated in non negative degrees. 
\begin{lemma} \label{relsing}
Let $\cC$ be a category endowed with a distinguished subcategory, a strictly initial object $0$ and a final point object $1$, a cosimplicial object $\Im$ (such that $\Im^0\cong 1$ and $\Im^1\cong I$) and $(1,0)\in \cC^\square$.  For $A\in \cA$  Grothendieck and $H^{\rm Sing}(-;A): \cC^\square\to \cA$ we have that $H^{\rm Sing}_0(1;A)=A$ is the coefficient object and $H^{\rm Sing}_i(1;A)=0$ for $i\neq 0$. 
In particular, for $\cA =  \iRMod$ and $A=|R|$ we obtain $$H^{\rm Sing}\df H^{\rm Sing}(-;|R|): \cC^\square\to \iRMod$$ representing the subfunctor $\Hom_{\rm Sing}(\cC^\square, -)\subset \Hom (\cC^\square, -)$ of singular relative homologies. If $\cC$ has (small) coproducts and $\Im$ is connected, \ie ${\rm Sing}^\Im_\d (X) = \coprod_k {\rm Sing}^\Im_\d (X_k)$ for $X = \coprod_k X_k$, then $\Hom_{\rm Sing}(\cC^\square, -)\subset \Hom_{\rm Add} (\cC^\square, -)$ and $H^{\rm Sing}$ is additive. 
\end{lemma}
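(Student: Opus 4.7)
The plan is to compute at the level of simplicial sets and chain complexes, then transfer to arbitrary Grothendieck targets using the universal property of $\iRMod$. For part (1), since $1$ is final, $\cC(\Im^n, 1)$ is a singleton for every $n$, so ${\rm Sing}^\Im_\d(1)$ is the constant simplicial set $\Delta^0$. Hence $C({\rm Sing}^\Im_\d(1), A)$ is $A$ in every non-negative degree with every face map equal to ${\rm id}_A$, and the alternating-sum differential alternates between $0$ and ${\rm id}_A$, giving homology $A$ in degree $0$ and zero elsewhere. On the other side, strict initiality of $0$ forces $\Im^n \neq 0$ for every $n$: any arrow $\Im^n \to 0$ would make $\Im^n \cong 0$, whence the cosimplicial arrow $1 = \Im^0 \to \Im^n$ would give $1 \cong 0$, contradicting $1 \neq 0$. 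Thus $\cC(\Im^n, 0) = \emptyset$, so ${\rm Sing}^\Im_A(0) = 0$ and ${\rm Sing}^\Im_A(1, 0) = {\rm Cone}(0 \to {\rm Sing}^\Im_A(1)) \simeq {\rm Sing}^\Im_A(1)$. Taking homology yields $H^{\rm Sing}_0(1; A) = A$ and $H^{\rm Sing}_i(1; A) = 0$ for $i \neq 0$, so $A$ is the coefficient object in the sense of Definition \ref{coeff}.

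For part (2), I would invoke the universal property of $\iRMod$ recalled in \cite[Def.\,1.3.3]{UCTI}: for any $R$-linear Grothendieck category $\cA$ and any $A \in \cA$ there is a unique $R$-linear cocontinuous exact functor $F_A: \iRMod \to \cA$ with $F_A(|R|) = A$. Because $F_A$ preserves direct sums and is exact, applying it componentwise to the simplicial construction yields $F_A \circ C({\rm Sing}^\Im_\d(-), |R|) \cong C({\rm Sing}^\Im_\d(-), A)$ in $\Ch(\cA)$ and then $F_A \circ {\rm Sing}^\Im_{|R|}(-, -) \cong {\rm Sing}^\Im_A(-, -)$ after taking mapping cones; passing to homology gives $F_A \circ H^{\rm Sing} \cong H^{\rm Sing}(-; A)$. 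Conversely, every singular relative homology into $\cA$ is by construction of the form $H^{\rm Sing}(-; A)$ for some $A \in \cA$, and uniqueness of $F_A$ (enforced by $F_A(|R|) = H^{\rm Sing}_0(1;A) = A$, using part (1)) shows the assignment $F_A \mapsto F_A \circ H^{\rm Sing}$ is a bijection natural in $\cA$, which is the representability claim. For part (3), the connectedness of $\Im$ gives ${\rm Sing}^\Im_\d(\coprod_k X_k) = \coprod_k {\rm Sing}^\Im_\d(X_k)$ and hence $C({\rm Sing}^\Im_\d(\coprod_k X_k), |R|) \cong \bigoplus_k C({\rm Sing}^\Im_\d(X_k), |R|)$ degreewise; since homology in the Grothendieck category $\iRMod$ commutes with direct sums, one obtains $H^{\rm Sing}(\coprod_k X_k) \cong \bigoplus_k H^{\rm Sing}(X_k)$, i.e.\ additivity in the sense of \cite[Axiom 2.3.8]{UCTI}.

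The main obstacles I anticipate are purely bookkeeping: one must check that $F_A$ commutes with the mapping cone construction used to form the relative singular complex from the absolute one, and the strict-initial argument excluding $\Im^n = 0$ must be spelled out from the hypotheses $(1,0) \in \cC^\square$ and $1 \neq 0$. Once these are in place the entire lemma reduces to the standard computation of the singular complex of a point combined with the universal property of $\iRMod$.
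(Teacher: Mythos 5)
Your proposal is correct and takes essentially the same route as the paper's proof: the same identification of ${\rm Sing}^\Im_\d(1)$ with the constant simplicial set $\Delta^0$ and the resulting alternating $0/{\rm id}$ complex, the same appeal to the universal property of $\iRMod$ (your $F_A$ is the paper's $R_A=\Ind r_A\circ L$, applied degreewise and through the cone to give $R_A(H^{\rm Sing})=H^{\rm Sing}(-;A)$), and the same connectedness argument for additivity. Your explicit verification that $\Im^n\not\cong 0$, so that ${\rm Sing}^\Im_A(0)=0$, spells out a detail the paper states without comment, but it is the same argument in substance.
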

\begin{proof}
 Since $0$ is strictly initial ${\rm Sing}^\Im_A (0)=0$. Therefore ${\rm Sing}^\Im_A (X, 0) ={\rm Sing}^\Im_A (X)$. For $X=1$ we have that ${\rm Sing}^\Im_\d (1) = \Delta^0$ and $C({\rm Sing}^\Im_\d (1), A)$ is the complex with $A$ in all degrees; since the differential is zero in odd degrees and the identity in even degrees $H^{\rm Sing}(-;A)$ satisfies the point axiom.

For $\cA = \iRMod = \Ind \iRmod$, $A=|R|$ and ${\rm Sing}^\Im_{|R|} (X, Y)$ as above, we have that
$$(X,Y) \leadsto {\rm Sing}^\Im_{|R|} (X, Y): \cC^\square \to \Ch (\iRMod)$$
is a canonical functor and $H^{\rm Sing}_i(X, Y)\df H_i({\rm Sing}^\Im_{|R|} (X, Y))\in \iRMod$ is a relative homology.

Moreover,  for any $R$-linear Grothendieck category $\cA$ we have that $A\in\cA$ induces a unique $R$-linear exact functor $r_A: \iRmod\to \cA$ such that $r_A(|R|)=A$ and $r_A$ uniquely extends to an $R$-linear exact functor which is preserving filtered colimits $$R_A\df \Ind r_A \circ L: \iRMod\to \cA$$ 
where $\Ind r_A : \Ind\iRmod\to \Ind\cA$ and $L: \Ind\cA\to \cA$  (see \cite[Prop.\,1.3.6]{UCTI}) such that $R_A  (H^{\rm Sing})= H^{\rm Sing}(-;A)$. In fact, we also get the functor 
$$\Ch R_A: \Ch (\iRMod) \to \Ch (\cA)$$
such  that $C(-, A)$ factors through $\Ch R_A$ and $C(-, |R|)$.

Finally, if the cosimplicial object $\Im$ is connected then ${\rm Sing}^\Im_A (X)= \oplus_k {\rm Sing}^\Im_A (X_k)$ for $X = \coprod_k X_k$ thus $H^{\rm Sing}(-;A)$ is additive. 
\end{proof}
Note that associating $H^{\rm Sing}(-;A)\leadsto H^{\rm Sing}_0(1;A)=A$ yields $\Hom_{\rm Sing}(\cC^\square, \cA)\by{\simeq} \cA$ an equivalence for $\cA$ Grothendieck. Actually, $\iRMod$ is representing this latter forgetful functor on Grothendieck categories.
\begin{thm}[Universal singular homology]\label{unising}
If $(\cC^\square, \Xi, I)$ is an homological structure on a category $\cC$ with (small) coproducts endowed with a connected cosimplicial object $\Im$ (such that $\Im^0\cong 1$ and 
$\Im^1\cong I$) then there is an additive ordinary homology $$H^{\rm Sing}=\{H^{\rm Sing}_i\}_{i\in \N}: \cC^\square\to \cA^{\rm Sing}_\partial(\cC)$$ which is representing $\Hom_{\rm Sing}(\cC^\square, -)\subset \Hom_{\rm Ord} (\cC^\square, -)$, \ie $H^{\rm Sing}$ is universal with respect to (additive) ordinary singular homologies.
\end{thm}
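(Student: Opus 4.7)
The plan is to start from the relative singular homology $H^{\rm Sing}:\cC^\square\to\iRMod$ of Lemma \ref{relsing}, which already represents $\Hom_{\rm Sing}(\cC^\square,-)$ as a subfunctor of $\Hom(\cC^\square,-)$ on Grothendieck-valued relative homologies, and which is additive (since $\Im$ is connected) and satisfies the point axiom. What remains is to cut down $\iRMod$ by the minimal amount needed to force excision and $I$-homotopy invariance, and then verify the universal property with respect to additive ordinary singular homologies.

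Concretely, I would define $\cA^{\rm Sing}_\partial(\cC)$ as the Grothendieck quotient of $\iRMod$ by the thick localising subcategory $\cS$ generated by
$$\ker(\epsilon_i),\ \coker(\epsilon_i)\qquad\text{for every }\epsilon:(Y,Z)\to(W,X)\in\Xi\ \text{and}\ i\in\N,$$
together with
$$\ker(\gamma_i-\delta_i),\ \coker(\gamma_i-\delta_i)\qquad\text{for every pair }\gamma\approx_I\delta\text{ of }I\text{-homotopic morphisms of }\cC^\square,$$
computed on the images $H^{\rm Sing}_i(X,Y)\in\iRMod$. Writing $q:\iRMod\to\cA^{\rm Sing}_\partial(\cC)$ for the Gabriel quotient, one sets $H^{\rm Sing}\df q\circ H^{\rm Sing}:\cC^\square\to\cA^{\rm Sing}_\partial(\cC)$. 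By construction the excision isomorphisms $\epsilon_i$ and the homotopy relations $\gamma_i=\delta_i$ hold after applying $q$; additivity and the point axiom are preserved because $q$ is an exact colimit-preserving functor of Grothendieck categories and $H^{\rm Sing}$ over $\iRMod$ already satisfies them. Hence $H^{\rm Sing}$ lands in $\Hom_{\rm Ord}(\cC^\square,\cA^{\rm Sing}_\partial(\cC))$ and is a singular ordinary homology.

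For the universal property, let $H'\in\Hom_{\rm Sing}(\cC^\square,\cA)\cap\Hom_{\rm Ord}(\cC^\square,\cA)$ with $\cA$ Grothendieck, so $H'=H^{\rm Sing}(-;A)$ for a unique $A=H'_0(1)\in\cA$. Lemma \ref{relsing} provides an $R$-linear exact colimit-preserving classifier $R_A:\iRMod\to\cA$ with $R_A(|R|)=A$ and $R_A\circ H^{\rm Sing}=H'$. Because $H'$ satisfies excision, the maps $\epsilon_i^{H'}=R_A(\epsilon_i)$ are isomorphisms, so $R_A$ annihilates $\ker\epsilon_i$ and $\coker\epsilon_i$; because $H'$ is $I$-homotopy invariant, $R_A$ annihilates the kernels and cokernels of $\gamma_i-\delta_i$. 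Thus $R_A(\cS)=0$, and by the universal property of the Gabriel quotient $R_A$ factors uniquely through $q$ as $r_{H'}:\cA^{\rm Sing}_\partial(\cC)\to\cA$, which is $R$-linear, exact, colimit-preserving and satisfies $r_{H'}\circ H^{\rm Sing}=H'$. Uniqueness on morphisms of homologies follows from the corresponding uniqueness for $R_A$ on $\iRMod$.

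The main obstacle is ensuring that the localising subcategory $\cS$ is precisely the right one: it must be large enough that the quotient homology really is ordinary, yet small enough that any ordinary singular classifier $R_A$ factors through it without collapsing. This amounts to checking that the generators of $\cS$ are exactly the obstructions extracted from the excision and homotopy axioms --- a bookkeeping argument parallel to, and essentially imported from, the construction of $\cA^{\rm ord}_\partial(\cC)$ in Theorem \ref{univES} together with the ind-completion step used there for the additive variant $\cA^{\rm Ord}_\partial(\cC)$.
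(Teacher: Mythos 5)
Your overall strategy --- quotient $\iRMod$ by a localising subcategory of relations extracted from excision and homotopy invariance applied to $H^{\rm Sing}$, then factor the classifier $R_A$ of Lemma \ref{relsing} through the Gabriel quotient --- is exactly the paper's route. However, your choice of generators for the homotopy part is wrong, and it breaks both halves of the argument. Killing $\ker(\gamma_i-\delta_i)$ and $\coker(\gamma_i-\delta_i)$ forces $\gamma_i-\delta_i$ to become an \emph{isomorphism} in the quotient (that is the relation appropriate to the excision maps $\epsilon_i$), not to become zero; so your claim that the relations $\gamma_i=\delta_i$ hold after applying $q$ fails, and $q\circ H^{\rm Sing}$ is not $I$-homotopy invariant unless the objects involved already vanish. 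Symmetrically, the factorization step collapses: if $H'=H^{\rm Sing}(-;A)$ is homotopy invariant then $R_A(\gamma_i-\delta_i)=\gamma_i^{H'}-\delta_i^{H'}=0$, and since $R_A$ is exact this gives $R_A(\ker(\gamma_i-\delta_i))\cong H'_i(X,Y)$ and $R_A(\coker(\gamma_i-\delta_i))\cong H'_i(X',Y')$, which are not zero in general; hence $R_A$ does \emph{not} annihilate your $\cS$ and does not factor through your quotient.

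The correct generators for the homotopy-invariance relation are the images $\im(\gamma_i-\delta_i)$, exactly as in the construction of $\cA_\partial^{I-\rm hi}(\cC)$ in Proposition \ref{hominv} and in the paper's proof here: killing $\im(\gamma_i-\delta_i)$ makes $q(\gamma_i)=q(\delta_i)$, and exactness of $R_A$ gives $R_A(\im(\gamma_i-\delta_i))=\im(\gamma_i^{H'}-\delta_i^{H'})=0$, so $R_A$ kills all generators and factors through $\pi:\iRMod\onto\cA^{\rm Sing}_\partial(\cC)$. Your kernel/cokernel generators are the right ones only for the excisive squares, where one does want $\epsilon_i$ to become invertible. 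With that single correction (and no extra ``bookkeeping'' beyond what you already invoke from Theorem \ref{univES}) your argument coincides with the paper's proof.
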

\begin{proof} Let $H^{\rm Sing}(-;A): \cC^\square \to \cA$ be a singular homology which is ordinary with respect to the homological structure.  
From the proof of Lemma \ref{relsing} we obtain the exact functor $R_A : \iRMod\to \cA$
such that  $R_A  (H^{\rm Sing})= H^{\rm Sing}(-;A)$. Let $\cA^{\rm Sing}_\partial(\cC)$ be the Grothendieck quotient of $\iRMod$ by the thick and localizing subcategory generated by $\im (\gamma_i-  \delta_i)$  for $\gamma_i, \delta_i: H_i^{\rm Sing} (X, Y)\to H_i^{\rm Sing}  (X', Y')$  all parallel $I$-homotopic morphisms  $\gamma \approx_I \delta: (X,Y)\to (X',Y')$ and by $\ker \epsilon_i$ and $\coker \epsilon_i$ for  $\epsilon_i : H_i^{\rm Sing} (Y,Z)\to H_i^{\rm Sing}  (W, X)$ induced by $\epsilon : (Y, Z)\to (W, X)$ all excisive squares in $\Xi$. The image $H^{\rm Sing}: \cC^\square \to \cA^{\rm Sing}_\partial(\cC)$ of $H^{\rm Sing}$ under the projection $$\pi: \iRMod\onto\cA^{\rm Sing}_\partial(\cC)$$ is the universal additive ordinary singular homology with coefficients in $\pi(|R|)$. In fact, the functor $R_A$ factors through $\cA^{\rm Sing}_\partial(\cC)$ because, by assumption, $H^{\rm Sing}(-;A)$ is ordinary. 
\end{proof}
We always have an exact functor (which is not a quotient, in general) $$r_{H^{\rm Sing}}: \cA^{\rm Ord}_\partial(\cC)\to \cA^{\rm Sing}_\partial(\cC)$$ but the existence of a non trivial ordinary singular homology is not granted and it is equivalent to the fact that the category $\cA^{\rm Sing}_\partial(\cC)$ is not trivial. Moreover, $\ker \pi\subset \iRMod$ is localizing, by the construction in the proof of Theorem \ref{unising}, thus  $\pi$ has a section $\cA^{\rm Sing}_\partial(\cC)\into \iRMod$. 
\begin{cor} \label{ord=sing}
Same assumptions and notation as in Theorem \ref{unising}.
If the universal ordinary homology $H$ coincides with the singular homology with universal coefficients $H = H^{\rm Sing}(- ;\underline{R}): \cC^\square \to \cA^{\rm Ord}_\partial(\cC)$ then $\cA^{\rm Ord}_\partial(\cC)\by{\simeq} \cA^{\rm Sing}_\partial(\cC)$ and $H= H^{\rm Sing}$. Moreover, all additive ordinary homologies in Grothendieck categories are singular homologies and conversely.
\end{cor}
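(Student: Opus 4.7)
The strategy is to exhibit a two-sided inverse to the canonical classifying functor $r_{H^{\rm Sing}} : \cA^{\rm Ord}_\partial(\cC)\to \cA^{\rm Sing}_\partial(\cC)$ by running the universal properties of Theorems \ref{univES} and \ref{unising} against each other.

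First I would observe that, under the hypothesis $H = H^{\rm Sing}(-;\underline{R})$, the universal additive ordinary homology $H:\cC^\square\to \cA^{\rm Ord}_\partial(\cC)$ is itself an additive ordinary singular homology in the Grothendieck category $\cA^{\rm Ord}_\partial(\cC)$, with coefficient object $\underline{R}=H_0(1)$. By the universal property established in Theorem \ref{unising} (applied to the target $\cA=\cA^{\rm Ord}_\partial(\cC)$, in which $H^{\rm Sing}(-;\underline{R})$ is ordinary by assumption), this produces a unique exact $R$-linear functor
\[ s\df r_{H^{\rm Sing}(-;\underline{R})}:\cA^{\rm Sing}_\partial(\cC)\longrightarrow \cA^{\rm Ord}_\partial(\cC)\]
preserving coproducts and sending the universal singular homology to $H$. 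The canonical $r_{H^{\rm Sing}}$ goes the other way and sends $H$ to $H^{\rm Sing}$.

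Next, I would argue that both compositions are naturally isomorphic to the identity by the uniqueness clause of the two universal properties. Indeed, $r_{H^{\rm Sing}}\circ s$ is an exact coproduct-preserving endofunctor of $\cA^{\rm Sing}_\partial(\cC)$ that classifies $H^{\rm Sing}$ itself, and the identity does the same; since the classifying functor is unique up to natural isomorphism, $r_{H^{\rm Sing}}\circ s\simeq \operatorname{id}$. The symmetric argument, applied using the universal property of $\cA^{\rm Ord}_\partial(\cC)$ to the endofunctor $s\circ r_{H^{\rm Sing}}$ (which classifies $H$), gives $s\circ r_{H^{\rm Sing}}\simeq \operatorname{id}$. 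Hence $r_{H^{\rm Sing}}$ is an equivalence and, transported across this equivalence, $H$ becomes $H^{\rm Sing}$.

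For the ``moreover'' clause, I would use that every additive ordinary homology $H':\cC^\square\to \cA$ in a Grothendieck category is classified by a unique exact coproduct-preserving functor $r_{H'}:\cA^{\rm Ord}_\partial(\cC)\to \cA$ with $r_{H'}(H)=H'$. Since $r_{H'}$ is $R$-linear, exact, and commutes with the coproducts used to build ${\rm Sing}^\Im_{(-)}$, it commutes with the singular construction in the sense that $r_{H'}(H^{\rm Sing}(-;A))\cong H^{\rm Sing}(-;r_{H'}(A))$ for any $A$; applying this to $A=\underline{R}$ and using the hypothesis $H=H^{\rm Sing}(-;\underline{R})$ gives $H'=H^{\rm Sing}(-;r_{H'}(\underline{R}))$, so $H'$ is singular. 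The converse is built into the standing assumption that singular homologies are ordinary with respect to $(\cC^\square,\Xi,I)$, which is precisely what makes Theorem \ref{unising} apply.

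The main obstacle I anticipate is the bookkeeping around Grothendieck-quotient universal properties: one must verify that $s$ and $r_{H^{\rm Sing}}$ both land among the exact, coproduct-preserving, $R$-linear functors classified by the two universal properties (so that the uniqueness clauses truly pin down the compositions up to natural isomorphism), and that $r_{H'}$ really does commute with the cone/chain-complex singular construction. Both checks amount to chasing the explicit construction of $H^{\rm Sing}(-;A)$ used in Lemma \ref{relsing}, where only finite direct sums and colimits over simplicial diagrams appear, so an exact coproduct-preserving functor between Grothendieck categories passes through without incident.
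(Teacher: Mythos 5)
Your argument is correct and is essentially the intended one: the paper leaves this corollary as a direct consequence of playing the universal properties of Theorem \ref{univES} and Theorem \ref{unising} against each other, which is exactly your ping-pong between $r_{H^{\rm Sing}}$ and $s$, and your observation that exact, coproduct-preserving classifying functors commute with the singular chain construction (only coproducts of copies of the coefficient object, cones and homology are involved) is the right substantive check for the ``moreover'' direction.

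One caveat concerns your last sentence. Theorem \ref{unising} has no standing assumption that all singular homologies are ordinary; it only represents the subfunctor $\Hom_{\rm Sing}(\cC^\square,-)\subset\Hom_{\rm Ord}(\cC^\square,-)$ of those singular homologies which are ordinary, ordinariness being forced on the universal one by the quotient defining $\cA^{\rm Sing}_\partial(\cC)$. Read in that sense, the converse clause is immediate without any appeal to an assumption: such theories are ordinary by definition and additive because $\Im$ is connected (Lemma \ref{relsing}). If instead you intend ``singular homology'' to mean an arbitrary $H^{\rm Sing}(-;A)$ as in Lemma \ref{relsing}, then your justification does not apply, and the hypothesis $H=H^{\rm Sing}(-;\underline{R})$ does not obviously yield excision and homotopy invariance for all coefficients $A$ in all Grothendieck categories; that stronger claim would need an additional argument (for instance knowing $\cA^{\rm Sing}_\partial(\cC)\cong\iRMod$, as is proved separately for the topological case in Lemma \ref{singord}). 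Apart from this point of interpretation, the proof stands.
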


\subsection{Homotopy versus homology} 
Given a (small) category $\cC$,  Dugger \cite{DU} has shown that there is a universal model category $U\cC$ built from $\cC$. The universal way of expanding $\cC$ into a model category is nicely and simply given by $$U\cC\df \SPsh (\cC)$$ the category of simplicial presheaves endowed with the Bousfield-Kan model 
structure (see \cite[Prop.\,2.3]{DU} for details on the universal property).  The model category $U\cC$ is providing the universal homotopy theory on $\cC$. Actually, this $U\cC$ is somewhat the free homotopy theory on $\cC$ and the standard technique of localization provides a method for imposing ``relations'' on $U\cC$.  

For any Grothendieck category $\cA$, consider a model structure on the category of chain complexes $\Ch (\cA )$ (see \cite{CD}).
Therefore, for any category $\cC$, we also obtain a model category
 $$M\cC\df \Ch (\Ind \cA (\cC))$$ whose underlying category is the category of chain complexes in the Grothendieck category $\Ind\cA (\cC)$ determined by  the universal homology $H$ (see \cite[Thm.\,2.1.2]{UCTI}): this yields a canonical functor from $\cC$ to $\Ch (\Ind \cA (\cC))$ sending $$X\leadsto M(X)\df ``\bigoplus_{i\in\Z}" H_i(X)[i]$$ the ``tautological" complex having $H_i(X)$ in degree $i$ and with zero differentials. Therefore we obtain a Quillen pair 
$$U\cC\qto M\cC$$
which is an abstract version of the Hurewicz map. Actually, we can get several versions of this pair, \eg for any decoration $\cA^\dag (\cC)$. 
\begin{example}
For $\cC = {\bf 1}$, the point category, the universal homotopy and the universal homology theories yield a Quillen pair
$$\SSet \qto \Ch (\iRMod)$$
which is universal among all Quillen pairs between $\SSet$ and $\Ch (\cA)$ induced by an object of a Grothendieck category $\cA$. In fact, given $A\in \cA$, \ie a functor ${\bf 1}\to \cA$,  we obtain the exact functor $\Ch R_A :\Ch (\iRMod)\to \Ch (\cA)$ such that $R_A(|R|) = A$ as $|R|\in \iRMod$ is the universal object.  
\end{example}
Similarly, the universal relative homology $H$ (see \cite[Thm.\,2.2.4]{UCTI}) yields a model category 
$$M\cC^\square\df \Ch (\Ind \cA_\partial(\cC))$$ 
along with the canonical functor $M : \cC^\square \to M\cC^\square$ given by the relative ``tautological" complex. Therefore 
$$U\cC^\square \qto M\cC^\square$$
there is a canonical Quillen pair.  Now any decoration $\cA_\partial^\dag(\cC)$ induces a corresponding decorated Quillen pair. For an homological category $(\cC^\square, \Xi, I)$ where $I$ is an interval in the sense of Voevodsky let $(U\cC^\square)_I$ be the Bousfield localisation of $U\cC^\square$ at the class $\{p: I(X, Y)\to (X,Y)\}$ indexed by $(X,Y)\in \cC^\square$ (see \cite[Def.\,5.4]{DU}). Denote by $$M^{\rm Ord}\cC^\square\df \Ch (\cA_\partial^{\rm Ord}(\cC))$$ the model category of chain complexes in the Grothendieck category $\cA_\partial^{\rm Ord}(\cC)$.
\begin{thm}[Homotopy versus homology]\label{hothom}
For any interval object $I$ in the sense of Voevodsky and an homological structure $(\cC^\square, \Xi, I)$ on a category $\cC$ there is a  canonical functor $M^{\rm Ord} : \cC^\square \to M^{\rm Ord}\cC^\square$ inducing 
$$(U\cC^\square)_I \qto M^{\rm Ord}\cC^\square$$
a Quillen pair which is universal with respect to Quillen pairs induced by additive ordinary homologies. 
\end{thm}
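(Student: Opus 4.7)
The plan is to mimic the construction of the Quillen pair $U\cC^\square \qto M\cC^\square$ given just before the theorem, but with the free relative homology replaced by the universal additive ordinary homology $H : \cC^\square\to \cA_\partial^{\rm Ord}(\cC)$ produced in Theorem \ref{univES}. First I would define
$$M^{\rm Ord}(X,Y) \df ``\bigoplus_{i\in\Z}"\, H_i(X,Y)[i]$$
as the tautological chain complex with zero differentials; this is functorial in $(X,Y)\in \cC^\square$ since $H$ is. Applying Dugger's universal property of $U\cC^\square = \SPsh(\cC^\square)$ (\cite[Prop.\,2.3]{DU}) to $M^{\rm Ord}$, regarded as a functor into the model category $\Ch(\cA_\partial^{\rm Ord}(\cC))$ with its model structure from \cite{CD}, yields a canonical Quillen pair $U\cC^\square \qto M^{\rm Ord}\cC^\square$.

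Next, I would descend this pair to the Bousfield localisation $(U\cC^\square)_I$ by the standard criterion: it suffices to check that the left adjoint sends each structural map $p : I(X,Y)\to (X,Y)$ of the localising class to a weak equivalence. On representables the left adjoint agrees with $M^{\rm Ord}$, so the question reduces to whether $M^{\rm Ord}(p)$ is a quasi-isomorphism in $\Ch(\cA_\partial^{\rm Ord}(\cC))$. Because $I$ is a Voevodsky interval, Lemmas \ref{homotopy} and \ref{hinv=hi} identify $I$-invariance with $I$-homotopy invariance; since $H$ is ordinary, hence $I$-homotopy invariant, each $H_i(p)$ is invertible in $\cA_\partial^{\rm Ord}(\cC)$, giving the required quasi-isomorphism.

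For the universality statement, I would argue as follows. Suppose one is given a Quillen pair $(U\cC^\square)_I \qto \Ch(\cA)$, with $\cA$ Grothendieck, induced (via Dugger's construction) by an additive ordinary homology $K : \cC^\square\to \cA$. Theorem \ref{univES} provides a unique $R$-linear exact cocontinuous functor $r_K : \cA_\partial^{\rm Ord}(\cC)\to \cA$ with $r_K\circ H = K$, whence the induced $\Ch r_K : M^{\rm Ord}\cC^\square \to \Ch(\cA)$ preserves quasi-isomorphisms and (one expects) is left Quillen for the model structure of \cite{CD}. Composing with the Quillen pair constructed in the previous two paragraphs, Dugger's universal property applied to $(U\cC^\square)_I$ then produces the desired factorisation of the given pair through $M^{\rm Ord}\cC^\square$, unique up to natural weak equivalence.

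The main obstacle will be this last step: pinning down precisely which Quillen pairs are meant by ``induced by an additive ordinary homology'' so that the factorisation is genuinely unique, and verifying the left-Quillen property of $\Ch r_K$ against the specific flavour (projective, injective, or flat) of model structure on chain complexes of Grothendieck categories fixed in \cite{CD}. Everything else should be formal, combining \cite[Prop.\,2.3]{DU}, the standard Bousfield-localisation criterion for descent of Quillen adjunctions, and the universality of $\cA_\partial^{\rm Ord}(\cC)$ already established in Theorem \ref{univES}.
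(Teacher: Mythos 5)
Your proposal is correct and follows essentially the same route as the paper: the tautological complex $\oplus H_i(X,Y)[i]$ valued in $\cA_\partial^{\rm Ord}(\cC)$ (the paper phrases this as composing with the canonical functor $\Ch(\Ind\cA_\partial(\cC))\to\Ch(\cA_\partial^{\rm Ord}(\cC))$ from Theorem \ref{univES}, which is the same thing), Dugger's universal property of $U\cC^\square$ plus descent to the Bousfield localisation via Lemma \ref{hinv=hi} and $I$-invariance of the ordinary $H$, and universality deduced from that of $\cA_\partial^{\rm Ord}(\cC)$ in Theorem \ref{univES}. The paper's proof is just a terser version of exactly this argument, so no further comparison is needed.
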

\begin{proof}
The functor $M^{\rm Ord}$ is obtained by composition of $(X,Y)\leadsto \oplus H_i(X,Y)[i]$   with the canonical functor $\Ch(\Ind \cA_\partial(\cC))\to \Ch (\cA_\partial^{\rm Ord}(\cC))$ which is induced by the exact functor from $\cA_\partial(\cC)$ to $\cA_\partial^{\rm Ord}(\cC)$ given by the proof of Theorem \ref{univES}. The Quillen pair is induced by the universal property of $U\cC^\square$ and localisation (see \cite[Prop.\, 2.3]{DU} and Lemma \ref{hinv=hi}). The claimed universality is induced by that of $\cA_\partial^{\rm Ord}(\cC)$ in Theorem \ref{univES}.  
\end{proof}
Note that for $\cC$ with a Voevodsky interval object $I$ and a Grothendieck topology $\tau$ the localised category $(U \cC/\tau)_{I}$ is playing an important r\^ole in the construction of several motivic homotopy theories (here $U\cC/\tau$ is the localised model category considered in \cite[Def.\,7.2]{DU}).   The classical Morel-Voevodsky model structure on $\cC=\Sm_k$, can be recovered from $U \Sm_k$ as the localization $(U \Sm_k/\Nis)_{\Aff^1_k}$ where $I =\Aff^1_k$ is the affine line and $\tau = \Nis$ is the Nisnevich topology (see \cite[Prop.\,8.1]{DU}).

\subsection{Cellular homology}
We are now going to explore some additional properties of our category $\cC$ along with its distinguished subcategory. For simplicity, we assume that all objects of $\cC$ are distinguished and that all isomorphisms of $\cC$ are distinguished. For a fixed $X\in\cC$  consider filtrations $X_\d\to X$ as shown
$$\xymatrix{X& \ar[l]^-{}  \cdots & \ar[l]^-{}  X_{p} & \ar[l]^-{}X_{p-1}& \ar[l]^-{}\cdots &\ar[l]^-{}X_0& \ar[l]^-{}X_{-1}=0 }$$ 
for families of distinguished (mono)morphisms $X_p\to X$ and $p$ is a non-negative integer. 
The following is a reformulation of \cite[Lemma 3.1.2]{BV}.
\begin{lemma}\label{2=0} Let $H\in \Hom (\cC^\square, \cA )$  be a relative homology.
Any triple of distinguished morphisms 
$X_{p-3}\to X_{p-2}\to X_{p-1}\to X_p$, \eg the distinguished monomorphisms in a filtration $X_\d\to X$, yields
$$\xymatrix{H_i(X_p,X_{p-1})\ar[r]^{\partial_i} \ar@/^1.7pc/[rr]^{\rm zero} & H_{i-1}(X_{p-1}, X_{p-2})\ar[r]^{\partial_{i-1}} & H_{i-2}(X_{p-2},X_{p-3})}$$
in  $\cA$ for all $i\in \Z$.
\end{lemma}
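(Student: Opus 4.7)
The plan is to express the composite $\partial_{i-1}\circ\partial_i$ in a way that exhibits two consecutive arrows in the long exact sequence of some pair as a middle factor; then exactness forces the whole composite to vanish.

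Precisely, I would first recall that the connecting morphism of a triple
$$\partial_i\colon H_i(X_p,X_{p-1})\to H_{i-1}(X_{p-1},X_{p-2})$$
factorises as
$$\partial_i \;=\; j^{p-1}_\ast \circ \delta^p,$$
where $\delta^p\colon H_i(X_p,X_{p-1})\to H_{i-1}(X_{p-1})$ is the boundary of the pair $(X_p,X_{p-1})$ from the long exact sequence of [UCTI, Thm.\,2.2.4], and $j^{p-1}_\ast\colon H_{i-1}(X_{p-1})=H_{i-1}(X_{p-1},0)\to H_{i-1}(X_{p-1},X_{p-2})$ is induced by the morphism of pairs $(X_{p-1},0)\to(X_{p-1},X_{p-2})$ in $\cC^\square$. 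This factorisation is a direct consequence of the naturality of the long exact sequence with respect to the $\partial$-cube associated to that morphism of pairs (exactly as in the proof of Proposition \ref{MV}), and it is nothing but the statement of [BV, Lemma 3.1.2], of which the present lemma is a reformulation. The completely analogous factorisation $\partial_{i-1}=j^{p-2}_\ast \circ \delta^{p-1}$ applies to the next connecting morphism, using the distinguished map $X_{p-3}\to X_{p-2}$.

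Composing the two factorisations yields
$$\partial_{i-1}\circ\partial_i \;=\; j^{p-2}_\ast\circ\bigl(\delta^{p-1}\circ j^{p-1}_\ast\bigr)\circ\delta^p.$$
The middle factor $\delta^{p-1}\circ j^{p-1}_\ast\colon H_{i-1}(X_{p-1})\to H_{i-1}(X_{p-1},X_{p-2})\to H_{i-2}(X_{p-2})$ is the composite of two consecutive arrows in the long exact sequence of the pair $(X_{p-1},X_{p-2})$, and therefore vanishes by exactness. Hence $\partial_{i-1}\circ\partial_i=0$, as asserted.

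There is no substantive obstacle here: once $\partial_i$ is identified with $j^{p-1}_\ast\circ\delta^p$, the conclusion is formal. The only bookkeeping point is to check that $(X_{p-1},0)\to(X_{p-1},X_{p-2})$ and $(X_{p-2},0)\to(X_{p-2},X_{p-3})$ are genuine morphisms in $\cC^\square$, which is guaranteed by the standing cellular assumptions that all objects and all isomorphisms of $\cC$ are distinguished together with the implicit convention $X_{-1}=0$ that makes $0\to X_p$ distinguished for all $p\geq 0$.
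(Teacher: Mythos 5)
Your proof is correct and is essentially the paper's own argument: the paper's commutative triangles encode exactly your factorisations $\partial_i=j^{p-1}_\ast\circ\delta^p$ and $\partial_{i-1}=j^{p-2}_\ast\circ\delta^{p-1}$ through $H_{*-1}(X_{p-1})$ and $H_{*-2}(X_{p-2})$, and the vanishing then follows, as you say, from exactness of the long exact sequence of the pair $(X_{p-1},X_{p-2})$ (naturality with respect to $\partial$-cubes justifying the factorisation, as in Proposition \ref{MV}).
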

\begin{proof} 
By naturality we get the following commutative triangles
$$
\xymatrix{& H_{*-1} (X_{p-1}) \ar[d]^-{} & \\
H_*(X_p,X_{p-1})\ar[ur]^-{}\ar[r]^-{\partial_*} & H_{*-1}(X_{p-1},X_{p-2})\ar[r]^-{\partial_*}\ar[dr]^-{}  &  H_{*-2}(X_{p-2},X_{p-3})\\
& & H_{*-2}(X_{p-2})\ar[u]}
$$
By exactness we then get that $\partial_*^2=0$ as claimed.
\end{proof}
\begin{defn}\label{ordcell}
Say that $(X, Y)\in \cC^\square$ is a \emph{good or cellular pair} for $H\in \Hom(\cC^\square, \cA)$ if either $Y\cong X$ or exists an integer $n$ such that $H_{i}(X, Y)=0$ for $i\neq n$ and  $H_{n}(X, Y)\neq 0$. For a filtration $X_\d\to X$ say that it is a \emph{good or cellular filtration} for $H$  if $(X_p, X_{p-1})$ is a cellular pair such that 
\begin{itemize}
\item $H_{i}(X_p, X_{p-1})=0$ for $i\neq p$,
\item $H_i(X_p)=0$ for $i>p$ and
\item $H_i(X_p)\cong H_i(X)$ for $i<p$, induced by $X_p\to X$.
\end{itemize}
Say that $H$ is \emph{locally cellular} if we have good filtrations for $H$ on a subcategory of $\cC$.
Say that the \emph{homological structure is cellular} if the universal ordinary homology $H\in \Hom(\cC^\square, \cA_\partial^{\rm Ord}(\cC))$ is locally cellular.
\end{defn}
Cellularity immediately implies that $H_i(X_0)= H_i(X)$ for $i<0$. If $X = X_d$ in a good filtration then  $H_{i}(X)=0$ for $i>d$. Note that when the homological structure is cellular  all ordinary homologies are cellular.   Moreover, if $\cA$ is an abelian category and $\cP\subseteq \cA$ a $\mathfrak{p}$-subcategory, see \cite[Def.\,1.3.14]{UCTI}, we may say that $H\in \Hom(\cC^\square, \cA)$ is $\mathfrak{p}$-cellular if the non zero homology objects in a good filtration are in $\cP$. For a tensor category $(\cA, \otimes)$ as in the assumptions of  
\cite[Def.\,1.4.1]{UCTI} we may say that $H$ is $\flat$-cellular if the non zero homology is in $\cA^\flat$.

For $H\in \Hom(\cC^\square, \cA)$ locally cellular and a good filtration $X_\d\to X$ we get the following commutative diagram with exact horizontal rows
$$\xymatrix{H_{i+1}(X_{i+1}, X_i)\ar[rd]_{\partial_{i+1}}\ar[r]^{}& H_{i}(X_i)\ar[r]^{\pi_i}\ar[d]^{\beta_i}& 
H_i(X)& \\
 &H_{i}(X_{i}, X_{i-1})\ar[rd]_{\partial_{i}}\ar[r]^{}& H_{i-1}(X_{i-1})\ar[d]^{\beta_{i-1}}\ar[r]^{\pi_{i-1}} &H_{i-1}(X)\\
 && H_{i-1}(X_{i-1}, X_{i-2})
}$$ 
and let 
$$C_*^H(X):\xymatrix{\cdots \to H_i(X_i,X_{i-1})\ar[r]^{\ \ \partial_i\ \ }  & H_{i-1}(X_{i-1}, X_{i-2})\ar[r]^{\partial_{i-1}\ \ } & H_{i-2}(X_{i-2},X_{i-3})\to\cdots}$$
be the chain complex in $\cA$ given by Lemma \ref{2=0}. The complex $C_*^H( - )$ is functorial with respect to good filtrations \ie morphisms $f: X\to X'$ inducing morphisms of pairs $(X_p, X_{p-1})\to (X_p', X_{p-1}')$ on the corresponding good filtrations. The topological version of the following is well-known, \cf \cite[Thm.\,2.35]{Ha}.
\begin{propose}[Cellular complex] \label{cellularity}
If $H\in \Hom(\cC^\square, \cA)$ is locally cellular and  $X_\d\to X$ is a good filtration then there is a canonical isomorphism in $\cA$ $$H_{i}(X) \cong H_i(C_*^H(X))$$  with the homology of the cellular complex $C_*^H(X)$ functorially with respect to morphisms of good filtrations. 
\end{propose}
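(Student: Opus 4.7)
My plan is to imitate the classical topological computation of cellular homology (\cf \cite[Thm.\,2.35]{Ha}) entirely inside $\cA$, working only from the three vanishing/comparison conditions of Definition \ref{ordcell}, the long exact sequences of the pairs $(X_p,X_{p-1})$, and the factorisations $\partial_{i+1}=\beta_i\circ\partial$, $\partial_i=\beta_{i-1}\circ\partial'$ displayed in the commutative diagram preceding the statement. The target is to identify $H_i(X)$ with $\ker\partial_i/\im\partial_{i+1}$ via canonical subquotients of $H_i(X_i)$.

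First I would extract the key vanishings. Plugging $H_i(X_p,X_{p-1})=0$ for $i\neq p$ and $H_q(X_p)=0$ for $q>p$ into the long exact sequence of $(X_p,X_{p-1})$ yields at once that $H_i(X_{p-1})\to H_i(X_p)$ is an isomorphism for $i<p-1$ and an epimorphism for $i=p-1$, that $\beta_i:H_i(X_i)\into H_i(X_i,X_{i-1})$ is a monomorphism (since $H_i(X_{i-1})=0$), and that $\beta_{i-1}:H_{i-1}(X_{i-1})\into H_{i-1}(X_{i-1},X_{i-2})$ is a monomorphism (since $H_{i-1}(X_{i-2})=0$). Iterating the first observation together with the third cellularity axiom $H_i(X_p)\cong H_i(X)$ for $i<p$ produces an epimorphism $H_i(X_i)\onto H_i(X_{i+1})\cong H_i(X)$ whose kernel, by exactness of the LES for $(X_{i+1},X_i)$, is the image of the connecting map $\partial:H_{i+1}(X_{i+1},X_i)\to H_i(X_i)$.

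Next I would compute cycles and boundaries of $C_*^H(X)$ in these terms. From $\partial_{i+1}=\beta_i\circ\partial$ and the injectivity of $\beta_i$ one reads $\im\partial_{i+1}=\beta_i(\im\partial)$; dually, from $\partial_i=\beta_{i-1}\circ\partial'$ with $\partial'$ the LES connecting morphism $H_i(X_i,X_{i-1})\to H_{i-1}(X_{i-1})$, the injectivity of $\beta_{i-1}$ forces $\ker\partial_i=\ker\partial'=\im\beta_i$, and the monomorphism $\beta_i$ identifies this last object canonically with $H_i(X_i)$. Combining the two computations produces
$$\ker\partial_i/\im\partial_{i+1}\ \cong\ H_i(X_i)/\im\partial\ \cong\ H_i(X_{i+1})\ \cong\ H_i(X)$$
in $\cA$; functoriality with respect to a filtration-preserving map $f:X\to X'$ is then automatic from the naturality of the long exact sequences and of the connecting morphisms. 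I do not anticipate any genuine obstacle: the whole argument is a diagram chase valid in an arbitrary abelian category, no Grothendieck or additivity hypothesis on $\cA$ is invoked, and in each degree only finitely many $X_p$ are involved. The only point that requires care is bookkeeping, matching the two cellularity vanishings to the two injectivities of $\beta_i$ and $\beta_{i-1}$ and tracking the two factorisations above, after which every step is immediate from exactness.
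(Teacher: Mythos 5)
Your argument is correct and is essentially the paper's own proof: the paper's one-line justification rests precisely on the facts you establish, namely that the cellularity vanishings make $\beta_i$ (and $\beta_{i-1}$) monomorphisms and $\pi_i$ an epimorphism with kernel $\im\partial$, after which the identification $H_i(X)\cong\ker\partial_i/\im\partial_{i+1}$ and its naturality follow by the standard diagram chase, valid in any abelian category. Your write-up merely makes explicit the bookkeeping the paper leaves implicit.
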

\begin{proof}
The computation of the homology of $C_*^H(X)$ is straightforward: follows from the fact that in the previous diagram $\beta_i$ are monos and $\pi_i$ are epis.  
\end{proof}
Note that if $H_i(X)$ are projective objects of $\cA$ we also have 
that $H_i(X)$ is a direct summand of $H_i(X_i)$ and we get $$\epsilon: \bigoplus_{i}^{} H_i(X)[i]\longby{\qi} C_*^H(X)$$
a quasi-isomorphism. In fact, $\pi_i$ do have a section $\sigma_i:H_i(X)\to H_i(X_i)$ if $H_i(X)$ is projective and    we get a chain morphism
 $$\epsilon_i\df \beta_i\sigma_i:H_i(X)[i]\to C_i^H(X)=H_{i}(X_{i}, X_{i-1})$$  by construction. 
\begin{example}
In particular, if an ordinary theory $H'\in \Hom_{\rm ord}(\cC^\square, \cA)$ on a homological category $(\cC^\square, \Xi, I)$ is locally cellular we have that the universal ordinary homology induces the following
$$\xymatrix{\cC^\square\ar[r]^{} \ar@/^1.7pc/[rr]^{H}&\cA^{\rm ord}_\partial(\cC)
\ar@/_1.7pc/[rr]_{r_{H'}}\ar[r]^{}& \cA (H')\ar[r]^{\bar r_{H'}} & \cA}$$
where the universal theory $H$ is also locally cellular since $\bar r_{H'}$ is faithful.  For example, let $\cC =\Sch_S$ be a suitable category of schemes (of finite type) over a (Noetherian) base scheme; let $\Sch_S^\square$ be the category whose objects are $f: Y\into X$ locally closed $S$-immersions, \ie $f= j\circ i$ where $i$ is a closed immersion and $j$ is an open immersion. Any Voevodsky $cd$-structure $\Xi_{cd}$ (see \cite{Vcd} and \cite{Vucd}) yields a class of excisive squares. The standard one is given by the usual Zariski open coverings $X=U\cup V$ where $\epsilon_X : (U, U\cap V) \to (U\cup V, V)\in \Sch^\square_S$ are excisive squares and the excisive squares given by the abstract blow-ups, \ie $f: Y\to X$ proper surjective $S$-morphism such that exists $Z\into X$ closed  where $Z'=f^{-1}(Z) $ and $Y- Z'\by{\simeq} X-Z$ is an isomorphism. 
For any $cd$-structure $\Xi_{cd}$ on $\Sch_S$  and $I=\Aff_S^1$ we get an homological structure $(\Sch_S^\square, \Xi_{cd}, \Aff_S^1)$ on $S$-schemes and the universal (additive) ordinary homology $$H: \Sch_S^\square\to \cA^{\rm ord}_\partial(\Sch_S)$$ 
which is the  abelian version of Voevodsky triangulated category of (constructible, effective) motives ``without transfers". Similarly, we obtain $H^\tr: \Sch_S^\square\to \cA^{\rm ord/\tr}_\partial(\Sch_S)$ the version ``with transfers'' for finite (surjective) morphisms. For $S=\Spec (k)$ the spectrum of a subfield $k\subset \C$ of the complex numbers we have that $H' = H^{\rm Sing}(-; R)$ is locally cellular with respect to affine schemes of finite type over $k$, thanks to Nori's Basic Lemma, see \cite[Thm.\,2.5.2]{HMS} and ordinary with respect to the standard homological structure, \cf Remark \ref{toprk} c). 
\end{example}
\begin{remark}
Let $(\Sch_k, \Xi_{cd}, \Aff^1_k)$ be an homological category of $k$-algebraic schemes over a field $k$. Consider ordinary homology theories $H$ such that
\begin{itemize}
\item[{\it a)}] $H_i(X,Y)=0$ is vanishing for $X$ affine and $i> d=\dim (X)$, and \item[{\it b)}] there is a ``relative duality'' isomorphism  $$H_i(X\setminus Y, Z\setminus (Y\cap Z))\cong H_{2d-i}(X\setminus Z, Y \setminus (Y\cap Z)$$ for $X$ proper smooth, $Y$ and $Z$ normal crossings such that $Y\cup Z$ is a normal crossing.
\end{itemize}
Further, eventually, require that this isomorphism is compatible with the long exact sequence of relative homology, \eg this is the case of $H^{\rm Sing}(-; K)$ singular homology with coefficients in a field $K$ (on $\Sch_k$ for $k\subset \C$).
For any field $K$ we can get an ordinary homology  $H: \Sch_k^\square\to \cA^{\rm dual}_\partial(\Sch_k)$ in a $K$-linear abelian category where these additional axioms hold true universally along with an exact $K$-linear functor 
$$r_{H}:\cA^{\rm ord}_\partial(\Sch_k)\to \cA^{\rm dual}_\partial(\Sch_k)$$
(and this latter functor is not trivial). In fact, adding $(X\setminus Y, Z\setminus (Y\cap Z), i)\to (X\setminus Z, Y \setminus (Y\cap Z), 2d-i)$ to the Nori diagram one follows the track of the proofs of \cite[Thm.\,2.2.4]{UCTI} and Theorem \ref{univES} to construct $\cA^{\rm dual}_\partial(\Sch_k)$. For an infinite field $k$ which admits resolution of singularities the homology $H$ is locally cellular over $\Sch_k$ by exactly the same arguments in the direct proof of the Basic Lemma, see the proof of \cite[Thm.\,2.5.2]{HMS}. 
\end{remark}

\section{Topological motives}
\subsection{Convenient spaces}
Let ${\rm CW}$ be the category of CW-complexes. 
Let  ${\rm CW}\subseteq \Top \subset {\rm Top}$ be a convenient category of topological spaces (\eg see \cite{St}) containing the category  of spaces having the homotopy type of a CW-complex: we get the standard homological structure $(\Top^\square, \Xi, I)$ on $\Top$ and ${\rm CW}$ as usual. 

Assume the subspaces to be distinguished so that $\Top^\square\subset \Top^{\bf 2}$ is the usual category of topological pairs. We have $0= \emptyset$ and the point objects $1= \{*\}$ are given by any singleton space. 

\begin{defn} \label{stop}
Let $\Xi = \{\cE_X\}_{X\in \Top}$ where $\cE_X=\{\epsilon_X\}$ is the family of excisive squares  $\epsilon_X : (U, U\cap V) \to (X, V)\in \Top^\square$ where $X =  \mathring{U}\cup \mathring{V}$ is the union of the interiors.  The closed real interval $I =[0,1]$  (or $I= \R$  the affine real line) is an interval object of $\Top$. Let $(\Top^\square, \Xi, I)$ be the \emph{standard homological structure} on $\Top$. 
\end{defn}

Excision and homotopy invariance imply that also $\epsilon_{X} : (Y,Z)\to (X, 1)\in \Top^\square$ is excisive for the pointed space $Z/Z\to X=Y/Z$ together with a subspace $S\subseteq Y$ such that $\bar{Z} \subseteq \mathring{S}$ and $Z\to S$ is a deformation retract, \eg if $(Y,Z)$ is a CW pair.  

For any convenient category $\Top$  of topological spaces we may introduce the following categories: the abelian categories $\cA^{\rm Ord}_\partial(\Top)$ and $\cA^{\rm Extra}_\partial(\Top)$, \ie the ordinary and extraordinary (effective) ``topological motives"  respectively: the topological ``motivic'' homology 
$$H: \Top^\square \to \cA^{\rm Ord}_\partial(\Top)$$
 is the universal ordinary homology. 
 
\begin{remarks}\label{toprk}
a) Similarly, the abelian categories $\cA^{\rm ord}_\partial(\Top)$ and $\cA^{\rm extra}_\partial(\Top)$ are categories of ``constructible'' topological motives and there are versions of topological motives ``with transfers'' by Remark \ref{trans}. There is an induced homological structure on the category of ``spaces with coefficients'' $\Top_\nabla$ given by $(X, E)$ a bundle $E$ of abelian groups on $X$ and bundle maps
$(X, E)\to (X', E')$ providing $\cA^{\rm Ord}_\partial(\Top_\nabla)$.\\

b) Note that the category ${\rm FTop}\subset \Top$ of finite topological spaces is also endowed with a standard homological structure $({\rm FTop}^\square, \Xi, I)$ even if $I\notin {\rm FTop}$. Thus we also get finite topological motives and a motivic homology  $H: {\rm FTop}^\square \to \cA^{\rm ord}_\partial({\rm FTop})$ for finite (possibly non discrete) topological spaces. Moreover, let $\Set\subset \Top$ be the category of small sets regarded as discrete spaces. We may consider the category of injections $Y\into X$ as a distinguished category $\Set^\square \subset\Set^{\bf 2}$. For $X\in \Set$ let $\epsilon_X : (Y, Z) \to (X, 1)\in \Set^\square$ such that $X= Y/Z$ is the push-out. Let $\cE_X=\{\epsilon_X\}$ be all such commutative squares and $\aleph = \{\cE_X\}_{X\in \Set}$. We get an homological subcategory $(\Set^\square, \aleph, I)$  regarding $\Set \subset \Top$ where homotopy of maps is equality.\\

c) Recall that the usual singular homology with $R$-coefficients $$H^{\rm Sing}(-;R) :\Top^\square \to\RMod$$ is an additive ordinary homology  with respect to the standard homological structure, \ie $H^{\rm Sing}(-;R)\in \Hom_{\rm Ord}(\Top^\square , \RMod)$ with coefficients $R = H_0^{\rm Sing}(1;R)$ (see \cite[\S 2.1]{Ha}, \cite{M} and \cite[Chap. VII]{ES}). This yields that $\cA^{\rm Ord}_\partial(\Top)$ is not trivial. Moreover, the exact $R$-linear realisation functor $r_{H^{\rm Sing}(-;R)}:\cA^{\rm Ord}_\partial(\Top)\to \RMod$ yields  $$\cA(H^{\rm Sing}(-;R))\subset \RMod$$ the category of topological Nori motives \ie the universal category generated by singular homology, which is a (non-full) non-trivial abelian subcategory. It is well known that singular homology is locally cellular with respect to the subcategory of cell complexes. Moreover, $H^{\rm Sing}_i(X ; E)$ singular homology with coefficients in a bundle yields an ordinary homology theory on the induced homological category (see \cite[Section 3.H]{Ha}). 
\end{remarks}

The category of simplicial sets $\SSet$ is endowed with a standard homological structure induced by the geometric realisation $|\ \ |: \SSet\to \Top$, where injective morphisms are distinguished,  $1 = \Delta^0$ is a point object and the simplicial interval $\Delta^1$ is such that $I = |\Delta^1|$, $\Xi\subset\SSet$ are all maps whose geometric realisation is a $(\Xi, I)$-generalised excision in such a way that 
$$|\  \ |: (\SSet^\square,\Xi,  \Delta^1)\to (\Top^\square,\Xi, I)$$
is an ordinary homological functor (as $|\  \ |$ preserves finite limits it preserves homotopies) which is additive (since $|\  \ |$ is a left adjoint), see Definition \ref{homdef}.

The standard cosimplicial space $\Im^n = |\Delta^n|$ for all $n\geq 0$ yields ${\rm Sing}^\Im_\d (-): \Top \to \SSet$,  the usual singular simplicial set functor. Consider the universal singular homology
$$H^{\rm Sing} : \Top^\square \to \iRMod$$ 
 as in Lemma \ref{relsing}, lifting the usual singular homology $H^{\rm Sing}(-;R)$ along the canonical exact functor 
 $\Ind r_R: \iRMod\to \RMod$ (by indization from \cite[Example 2.1.8]{UCTI}). 
\begin{lemma} \label{singord}
$H^{\rm Sing}\in \Hom_{\rm Ord}(\Top^\square,  \iRMod)$,  $\cA^{\rm Sing}_\partial(\Top)= \iRMod$ and $$r_{H^{\rm Sing}}: \cA^{\rm Ord}_\partial(\Top) \to \iRMod$$ is such that $r_{H^{\rm Sing}}!_\partial=id$. Moreover, for any Grothendieck category $\cA$ the singular homology 
$H^{\rm Sing}(-; A)$ with coefficients in an object $A\in\cA$ is ordinary with respect to  the standard homological structure. 
 \end{lemma}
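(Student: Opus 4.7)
The plan is to prove the lemma in four logical steps: (i) show that $H^{\rm Sing}$, viewed as a relative homology with values in $\iRMod$, is already additive ordinary; (ii) deduce $\cA^{\rm Sing}_\partial(\Top)=\iRMod$ from the explicit quotient description in the proof of Theorem~\ref{unising}; (iii) obtain $r_{H^{\rm Sing}}$ from Theorem~\ref{univES} and identify $r_{H^{\rm Sing}}!_\partial$ with the identity via the universal property of $\iRMod$; and (iv) transport ordinarity from $H^{\rm Sing}$ to $H^{\rm Sing}(-;A)$ along the exact realisation $R_A:\iRMod\to\cA$.

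For step (i), Lemma~\ref{relsing} already supplies the point axiom and additivity once one observes that the standard cosimplicial space $|\Delta^\bullet|$ is path-connected, hence connected in the sense of that lemma. The remaining axioms, $I$-homotopy invariance and excision, reduce to classical chain-level constructions: the prism operator producing a natural chain homotopy between $i_0$ and $i_1$ on $C({\rm Sing}_\d^\Im(-),A)$, and the barycentric subdivision together with its associated natural chain homotopy yielding the small-simplices theorem for an excisive cover $X=U\cup V$. Both are described by finite $\Z$-linear combinations of face and degeneracy maps indexed by singular simplices, hence are natural in the coefficient object $A$ for any Grothendieck $\cA$; specialising to $\cA=\iRMod$ and $A=|R|$ gives the required isomorphisms $p_i:H_i^{\rm Sing}(I(X,Y))\cong H_i^{\rm Sing}(X,Y)$ and $\epsilon_i:H_i^{\rm Sing}(U,U\cap V)\cong H_i^{\rm Sing}(X,V)$ in $\iRMod$.

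For step (ii), the proof of Theorem~\ref{unising} presents $\cA^{\rm Sing}_\partial(\Top)$ as the Grothendieck quotient of $\iRMod$ by the localising subcategory generated by images of $\gamma_i-\delta_i$ for $I$-homotopic pairs $\gamma\approx_I\delta$ and by kernels and cokernels of the excision maps on $H^{\rm Sing}$; step (i) kills every such generator, so the projection $\iRMod\onto\cA^{\rm Sing}_\partial(\Top)$ is an equivalence. For step (iii), universality applied to the ordinary additive homology $H^{\rm Sing}$ yields an exact $R$-linear functor $r_{H^{\rm Sing}}:\cA^{\rm Ord}_\partial(\Top)\to\iRMod$ preserving filtered colimits with $r_{H^{\rm Sing}}(\underline{R})=H_0^{\rm Sing}(1)=|R|$. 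The composite $r_{H^{\rm Sing}}!_\partial:\iRMod\to\iRMod$ is then exact, $R$-linear, colimit-preserving, and sends $|R|$ to $|R|$, so by the universal property of $\iRMod$ from \cite[Def.\,1.3.3]{UCTI} it is uniquely isomorphic to the identity. Finally, for step (iv), the identity $R_A\circ H^{\rm Sing}=H^{\rm Sing}(-;A)$ established in the proof of Lemma~\ref{relsing}, together with exactness of $R_A$, transports excision and homotopy invariance from $H^{\rm Sing}$ to $H^{\rm Sing}(-;A)$; the point axiom and additivity are direct from Lemma~\ref{relsing}.

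The main obstacle will be step (i): one must justify that the classical prism and subdivision constructions really do extend verbatim to chain complexes built from coproducts of copies of the universal object $|R|$ in $\iRMod=\Ind\iRmod$. Since both operators are prescribed by finite signed sums of face-degeneracy compositions and $\iRMod$ is a cocomplete $R$-linear abelian category, the algebraic identities defining the chain homotopies transport formally, and the naturality in $A$ of the construction $C(-,A):\SSet\to\Ch(\cA)$ does the rest.
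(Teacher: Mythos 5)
Your proposal is correct and follows essentially the same route as the paper: ordinarity of $H^{\rm Sing}(-;|R|)$ via the classical prism and barycentric-subdivision arguments (which the paper invokes as ``the same arguments as for usual singular homology with $R$ coefficients''), triviality of the localising subcategory in the proof of Theorem~\ref{unising} giving $\cA^{\rm Sing}_\partial(\Top)=\iRMod$, the universal property of hieratic $R$-modules giving $r_{H^{\rm Sing}}!_\partial=id$ from $r_{H^{\rm Sing}}(\underline{R})=|R|$, and transport along the exact realisation $R_A$ for general coefficients. Your step (i)/(iv) merely makes explicit the chain-level naturality in the coefficient object that the paper leaves implicit, so there is no substantive difference.
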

 \begin{proof}
 The fact that $H^{\rm Sing}(-; |R|)$ satisfies homotopy invariance and excision with respect to $(\Top^\square,\Xi, I)$ follows by exactly the same arguments for the usual singular homology with $R$ coefficients. Therefore the Lemma \ref{relsing} and the Theorem \ref{unising} are equivalent and all singular homologies are ordinary. Since $H^{\rm Sing}_0(1)\df |R|\in \iRMod$ is the coefficient object and  $r_{H^{\rm Sing}}(H)= H^{\rm Sing}$, in particular $r_{H^{\rm Sing}}(\underline{R})=|R|$ henceforth $r_{H^{\rm Sing}}!_\partial= id : \iRMod\to \iRMod$.
 \end{proof}
  Moreover, $r_{H^{\rm Sing}}$ yields an interesting but mysterious abelian subcategory 
$$\bar{r}_{H^{\rm Sing}}: \cA (H^{\rm Sing})\df \cA^{\rm Ord}_\partial(\Top)/\ker r_{H^{\rm Sing}}\into \iRMod$$ such that $\iRMod\subseteq \cA (H^{\rm Sing})$ is a full subcategory.  

However, for $\Top$ a generic topological category, it seems difficult to describe $\cA^{\rm Ord}_\partial(\Top)$ and/or to see wether $\cA (H^{\rm Sing})$ is equal to $\iRMod$ or not, unless we restrict to ${\rm CW}$ the category of cell complexes.

\subsection{Cell complexes}
Consider $(\text{CW}, \Xi, I)$ the standard homological structure restricted to the subcategory $\text{CW}\subset \Top$. For CW-complexes say that sub-complexes are sub-distinguished. 
Recall that the geometric realisation $|\  \ |$  has essential image in ${\rm CW}$ and we easily obtain: 
\begin{propose} The functor $|\  \ |$  restricts to an equivalence
$$|\ \ |_\partial : \cA^{\Delta^1-\rm hi}_\partial({\rm Kan})\longby{\simeq}\cA^{I-\rm hi}_\partial({\rm CW})$$
where ${\rm Kan}\subset \SSet$ are the Kan complexes (note  that $\Delta^1\notin {\rm Kan}$).
\end{propose}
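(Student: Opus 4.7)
The plan is to construct an exact quasi-inverse ${\rm Sing}_\partial$ to $|\cdot|_\partial$ by exploiting the adjunction $|\cdot|\dashv {\rm Sing}$ between $\SSet$ and $\Top$. First, the geometric realisation functor restricts to $|\cdot|^\square: {\rm Kan}^\square \to {\rm CW}^\square$: it sends a sub-simplicial set inclusion to a CW subcomplex inclusion, its essential image lies in ${\rm CW}$, and $|X \times \Delta^1| \cong |X|\times I$ since $|\cdot|$ preserves the relevant finite products. Consequently, if $\gamma \approx_{\Delta^1} \delta$ in ${\rm Kan}^\square$ then $|\gamma|\approx_I |\delta|$, so $H\circ |\cdot|^\square$ is a $\Delta^1$-homotopy invariant relative homology on ${\rm Kan}^\square$ and the universal property of $H: {\rm Kan}^\square \to \cA^{\Delta^1-\rm hi}_\partial({\rm Kan})$ in Proposition \ref{hominv} produces $|\cdot|_\partial$. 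Symmetrically, ${\rm Sing}(X)$ is always a Kan complex and ${\rm Sing}$ preserves injections, so there is an induced ${\rm Sing}^\square: {\rm CW}^\square \to {\rm Kan}^\square$. An $I$-homotopy $H : X\times I \to Y$ in ${\rm CW}^\square$ transports to a $\Delta^1$-homotopy via the composite
$${\rm Sing}(X)\times \Delta^1 \to {\rm Sing}(X)\times {\rm Sing}(I) \cong {\rm Sing}(X\times I) \to {\rm Sing}(Y),$$
where the first arrow uses the adjunction unit $\Delta^1 \to {\rm Sing}(|\Delta^1|) = {\rm Sing}(I)$ and the last is ${\rm Sing}(H)$. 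By the universal property of $H: {\rm CW}^\square \to \cA^{I-\rm hi}_\partial({\rm CW})$, this produces the exact functor ${\rm Sing}_\partial : \cA^{I-\rm hi}_\partial({\rm CW})\to \cA^{\Delta^1-\rm hi}_\partial({\rm Kan})$.

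Next I would check that the counit and unit of the adjunction become natural isomorphisms on the universal categories. For a CW pair $(X,Y)$, the counit $\epsilon_X: |{\rm Sing}(X)|\to X$ is a weak homotopy equivalence by Milnor, hence a homotopy equivalence of CW complexes by Whitehead. Since the subcomplex inclusions $Y\hookrightarrow X$ and $|{\rm Sing}(Y)|\hookrightarrow |{\rm Sing}(X)|$ are cofibrations, the induced map of pairs $(\epsilon_X,\epsilon_Y)$ is a homotopy equivalence of pairs, in particular an $I$-homotopy equivalence in ${\rm CW}^\square$, hence universally $I$-homotopically invertible in the sense of Definition \ref{univgenexc}. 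Naturality of $\epsilon$ then yields a natural isomorphism $|\cdot|_\partial \circ {\rm Sing}_\partial \cong \mathrm{id}_{\cA^{I-\rm hi}_\partial({\rm CW})}$. The symmetric argument uses that for a Kan pair $(K,L)$ the unit $\eta_K: K\to {\rm Sing}(|K|)$ is a weak equivalence between fibrant simplicial sets, hence a simplicial homotopy equivalence, and the analogous pair argument in $\SSet$ delivers a $\Delta^1$-homotopy equivalence of Kan pairs, giving ${\rm Sing}_\partial \circ |\cdot|_\partial \cong \mathrm{id}_{\cA^{\Delta^1-\rm hi}_\partial({\rm Kan})}$.

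The main technical obstacle is upgrading the well-known componentwise weak/homotopy equivalences of counit and unit to equivalences of \emph{pairs}, which is what is needed for universal homotopical invertibility in the relative homology categories. For CW pairs this follows from the pair version of Whitehead's theorem combined with the fact that subcomplex inclusions are cofibrations; for Kan pairs it follows from the standard model structure on $\SSet$ in which every object is fibrant and every sub-simplicial-set inclusion is a cofibration, so a componentwise weak equivalence of such pairs is a homotopy equivalence of pairs. A minor ancillary check is that the distinguished subcategories on ${\rm Kan}$ and ${\rm CW}$ are chosen compatibly (subcomplex, respectively subsimplicial, inclusions) so that $|\cdot|^\square$ and ${\rm Sing}^\square$ are genuine homological functors in the sense of Definition \ref{homdef}, and that the homotopy compatibility of Definition \ref{Ihomotopic} is fulfilled by the constructions above.
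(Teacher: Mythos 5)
Your proposal is correct and is essentially the paper's own argument made explicit: the paper simply invokes the classical equivalence ${\rm Kan}/\approx_{\Delta^1}\;\cong\;{\rm CW}/\approx_{I}$ induced by the adjunction $|\ \ |\dashv {\rm Sing}$ (Milnor's theorem plus Whitehead) together with the fact that homotopy invariant homologies factor through these quotients, so that $\Hom_{I-\rm hi}({\rm CW}^\square,-)\cong\Hom_{\Delta^1-\rm hi}({\rm Kan}^\square,-)$ and hence the representing abelian categories agree, while you unpack precisely this by constructing $|\ \ |_\partial$ and ${\rm Sing}_\partial$ from the universal property and checking that unit and counit become invertible. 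One harmless slip: in the Kan--Quillen model structure every object is cofibrant, not fibrant; what your argument actually uses is that Kan pairs consist of fibrant (and automatically cofibrant) objects, which is exactly why the unit is a simplicial homotopy equivalence.
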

\begin{proof}
Actually, ${\rm Kan}/\approx_{\Delta^1}\cong {\rm CW}/\approx_I$ and any homotopy invariant homology factors uniquely through the quotient category in such a way that 
$$\Hom_{I-\rm hi} ({\rm CW}^\square, - )\cong \Hom_{\Delta^1-\rm hi} ({\rm Kan}^\square, - )$$
are equivalent 2-functors. 
\end{proof}
Restricting to Kan complexes ${\rm Kan}\subset \SSet$  we then obtain the following homological equivalence (see  Definition \ref{homeqdef})
$$|\ \ |_\partial : \cA^{\rm ord}_\partial({\rm Kan})\longby{\simeq}\cA^{\rm ord}_\partial({\rm CW})$$
and we have:
\begin{thm}\label{ordCW} The standard homological structure $({\rm CW},\Xi, I)$ is cellular, the functor
$$!_\partial : \iRMod\longby{\simeq}  \cA_\partial^{\rm Ord}({\rm CW})$$ is an equivalence and $H_i(X, Y)\cong !_\partial (H^{\rm Sing}_i(X,Y))\in \cA_\partial^{\rm Ord}({\rm CW})$ with quasi-inverse $r_{H^{\rm Sing}}$ so that $\cA (H^{\rm Sing})= \iRMod$. 
\end{thm}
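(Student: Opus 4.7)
The plan is to show that the pair
$$!_\partial : \iRMod \rightleftarrows \cA_\partial^{\rm Ord}({\rm CW}) : r_{H^{\rm Sing}}$$
is an equivalence. By Lemma \ref{singord} we already know that $r_{H^{\rm Sing}}\circ !_\partial \cong \mathrm{id}$, so it suffices to prove that $!_\partial\circ r_{H^{\rm Sing}} \cong \mathrm{id}$, or equivalently that the universal ordinary homology $H$ factors canonically as $!_\partial\circ H^{\rm Sing}$. Both this factorisation and the cellularity assertion will follow simultaneously by applying Proposition \ref{cellularity} to the skeletal filtration.

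First I would establish that $({\rm CW},\Xi, I)$ is cellular. For a CW-pair $(X,Y)$ take the filtration $X_p \df Y\cup X^{(p)}$ combining $Y$ with the $p$-skeleton of $X$. Since CW-pairs are good pairs, the quotient map $(X_p, X_{p-1})\to (X_p/X_{p-1}, *)$ is a $(\Xi,I)$-generalised excision by the remark following Definition \ref{stop}, and $X_p/X_{p-1}$ is a wedge $\bigvee_\alpha S^p_\alpha$ over the $p$-cells of $X\setminus Y$. An inductive computation using Proposition \ref{MV} and the point axiom gives $\tilde H_i(S^n)\cong \underline R$ for $i=n$ and zero otherwise; additivity then yields $H_p(X_p, X_{p-1}) \cong \bigoplus_\alpha \underline R$ concentrated in degree $p$. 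Iterated long exact sequences of pairs, combined with a mapping-telescope/continuity argument supplied by additivity for passing to the infinite skeletal colimit, yield the remaining cellularity conditions $H_i(X_p) = 0$ for $i>p$ and $H_i(X_p)\cong H_i(X)$ for $i<p$.

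With cellularity in hand, Proposition \ref{cellularity} identifies $H_i(X,Y)$ with the homology of the cellular complex $C^H_*(X,Y)$, whose terms $C_p^H(X,Y) = \bigoplus_\alpha \underline R$ all lie in the essential image of $!_\partial$. Because $r_{H^{\rm Sing}}\circ !_\partial \cong \mathrm{id}$, the functor $r_{H^{\rm Sing}}$ sends $C^H_*(X,Y)$ to the classical singular cellular complex computing $H^{\rm Sing}_i(X,Y)$, and applying $!_\partial$ to the latter recovers the former termwise and thus on homology. Hence $H_i(X,Y)\cong !_\partial(H^{\rm Sing}_i(X,Y))$ naturally in $(X,Y)$, which is the desired factorisation. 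By the universal property of $H$ this forces $!_\partial\circ r_{H^{\rm Sing}} \cong \mathrm{id}$, so $!_\partial$ is an equivalence with quasi-inverse $r_{H^{\rm Sing}}$. Finally, $\cA(H^{\rm Sing}) = \iRMod$ is immediate since $\ker r_{H^{\rm Sing}} = 0$.

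The main obstacle I anticipate is the cellularity step: executing the skeletal computations---wedge-of-spheres additivity, the Mayer--Vietoris induction for $\tilde H_*(S^n)$, and above all the passage to the infinite skeletal colimit---entirely inside the abstract Grothendieck category $\cA_\partial^{\rm Ord}({\rm CW})$ using only Eilenberg--Steenrod plus additivity, without importing external topological input beyond what the axioms already encode. A secondary subtlety is verifying, by naturality through $r_{H^{\rm Sing}}$, that the abstract cellular boundaries agree with the classical singular cellular boundaries so that the cellular complex computation really does transport along $!_\partial$.
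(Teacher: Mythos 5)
Your overall strategy is the paper's: prove local cellularity via the skeletal filtration (wedge-of-cells computation for $H_p(X_p,X_{p-1})$, mapping telescope plus additivity for the infinite-dimensional case), compute $H$ by the cellular complex of Proposition \ref{cellularity}, identify it with singular homology with universal coefficients, and then conclude the equivalence exactly as in Corollary \ref{ord=sing}, with $\cA(H^{\rm Sing})=\iRMod$ from $\ker r_{H^{\rm Sing}}=0$. The cellularity half and the final formal steps are fine (modulo the small point that Definition \ref{ordcell} filters an absolute object with $X_{-1}=0$, so the relative case is handled in the paper by reducing $(X,Y)$ to the pointed quotient $X/Y$ via excision and homotopy invariance rather than by filtering the pair by $Y\cup X^{(p)}$).

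The genuine gap is in the step where you transport the cellular complex along $!_\partial$: ``because $r_{H^{\rm Sing}}\circ !_\partial\cong\mathrm{id}$, ... applying $!_\partial$ to the latter recovers the former termwise and thus on homology.'' Termwise this is true, but the differentials are the whole point, and the argument is circular. The boundary map $\partial_p^H\colon\bigoplus_\alpha\underline R\to\bigoplus_\beta\underline R$ of $C^H_*(X)$ is a morphism between objects in the image of $!_\partial$ whose image under $r_{H^{\rm Sing}}$ is the singular cellular boundary; but at this stage $r_{H^{\rm Sing}}$ is not known to be faithful and $!_\partial$ is not known to be full (these are consequences of the theorem, not inputs), so knowing $r_{H^{\rm Sing}}(\partial_p^H)=\partial_p^{\rm Sing}$ does not let you conclude $\partial_p^H=!_\partial(\partial_p^{\rm Sing})$ --- which is exactly what ``the cellular complex computation transports along $!_\partial$'' requires. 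The paper closes this gap intrinsically, not through $r_{H^{\rm Sing}}$: comparing the two additive ordinary theories $H$ and $H^{\rm Sing}(-;\underline R)$ (both with coefficients $\underline R$ and both locally cellular), it checks that their cellular complexes agree naturally as in Hatcher's uniqueness argument (Thm.\,4.59), the key input being that a degree-$m$ map $S^n\to S^n$ induces multiplication by $m$ on $H_n(S^n)=\underline R$ in \emph{both} theories, proved from additivity and the sum operation in $\pi_n(S^n)$ (Lemma 4.60); the cellular boundary formula then forces the differentials to be the same integer matrices, hence manifestly in the image of $!_\partial$ and equal to the singular ones, and the same degree argument also settles the naturality of the identification. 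Without some such intrinsic determination of the universal cellular boundaries (i.e.\ an Eilenberg--Steenrod uniqueness step), your factorisation $H\cong !_\partial\circ H^{\rm Sing}$, and hence $!_\partial\circ r_{H^{\rm Sing}}\cong\mathrm{id}$, is not established.
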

\begin{proof}
For $X\in {\rm CW}$ the skeletal filtration $X_\d\to X$ is a good filtration for any additive ordinary homology $H : {\rm CW}^\square\to\cA$ in a Grothendieck category $\cA$ with coefficients $H_0(1)\df A\in \cA$  as it can be seen by a straightforward computation. In fact, note that for a CW pair $(X,Y)$ the quotient $X/Y$ is a pointed CW-complex and we have that $H_i(X, Y)\cong \tilde{H}(X/Y)$ by excision and homotopy invariance (\cf Definitions \ref{ES} - \ref{coeff}). Therefore, we have the following
$$\cdots \to \tilde{H}_i(Y)\to \tilde{H}_i(X)\to \tilde{H}_i(X/Y)\to \tilde{H}_{i-1}(Y)\to \cdots$$ 
long exact sequence; applying this to $X=D^n$, $Y=S^{n-1}$ and $X/Y=S^{n}$ for $n>0$ we obtain $\tilde{H}_i(S^{n})=0$ for $i\neq 0$ and $\tilde{H}_n(S^{n})=A$. Since $\coprod (D^p, S^{p-1})\to (X_{p},  X_{p-1})$ induces $\oplus H_i(D^p, S^{p-1})\cong H_i (X_{p},  X_{p-1})$ we obtain $H_i (X_{p},  X_{p-1})=0$ for $i\neq p$ and $H_p (X_{p},  X_{p-1})\cong \oplus A$ by additivity, where the sum is indexed on the set of $p$-cells.  If $X$ is finite dimensional we have that $X=X_d$ implies that $H_i(X_p)\cong H_i(X_d)$ for $i<p$ and since $X_0=\coprod *$ is a discrete set we have that $0= \oplus H_i(*)= H_i(X_p)$ if $i>p$ so that $H$ is locally cellular (see Definition \ref{ordcell}). If $X=\cup X_p$ is not finite dimensional, as explained in the proof of \cite[Lemma 2.34 (c)]{Ha} applied to our $H$, by additivity and the ``mapping telescope'' construction we still get that $H_i(X_p)\cong H_i(X)$ for $i <p$. Thus $H$ is locally cellular on ${\rm CW}$. Now as in Proposition \ref{cellularity} we obtain that $H_i(X)\cong H_i(C^H_*(X))$ can be computed by the cellular complex. By naturality of the complex in Lemma \ref{2=0} we get that this isomorphism is functorial for cellular mappings yielding that the relative homology $H$ on ${\rm CW}^\square$ is isomorphic to the relative homology determined by the cellular complex $C^H_*$ (using again that the homology of a ${\rm CW}$-pair $(X,Y)$ is given by that of $X/Y$ 
as a pointed ${\rm CW}$-complex). 

As a consequence:  universal additive ordinary homology $H:  {\rm CW}^\square\to\cA_\partial^{\rm Ord} ({\rm CW})$ with coefficients $H_0(1)\df \underline{R}= !_\partial (|R|)$ and singular homology $H^{\rm Sing}(-;\underline{R})$, which is additive and ordinary by Lemma \ref{singord}, are locally cellular. Therefore, we have that $H=H^{\rm Sing}(-;\underline{R})$ once we have checked that the two cellular complexes $C^H_*$ and $C^{H^{\rm Sing}}_*$ with coefficients in $\underline{R}$ do agree naturally. This latter checking can be done as in the proof of \cite[Thm.\,4.59]{Ha} (and, actually, it is equivalent to Eilenberg-Steenrod uniqueness theorem after an exact embedding of $\cA_\partial^{\rm Ord} ({\rm CW})$ in abelian groups, see also  \cite{M} and \cite[Chap.\, IV Thm.\, 10.1]{ES}).
In fact, to check that the cellular boundary maps are the same one is left to see (\cf the cellular boundary formula in the proof of \cite[Thm.\,2.35]{Ha}) that a map $S^n\to S^n$ of degree $m$ induces multiplication by $m$ on $H_n(S^{n})=\underline{R}=H^{\rm Sing}_n(S^{n})$ in both cases. This is clear since we have that both theories are additive with respect to the sum operation in $\pi_n(S^n)$ (\eg as explained in \cite[Lemma 4.60]{Ha}). In order to check naturality of the isomorphism $C^H_*=C^{H^{\rm Sing}}_*$ one is left with exactly the same problem as for the cellular boundary maps (see the argument in the proof of \cite[Thm.\,4.59]{Ha}).
Thus (as in Corollary \ref{ord=sing}) $$r_{H^{\rm Sing}}: \cA_\partial^{\rm Ord}({\rm CW})\longby{\simeq} \iRMod$$ 
yields an equivalence with quasi-inverse $!_\partial $ such that $!_\partial (H^{\rm Sing})= H^{\rm Sing}(-;\underline{R})=H$ with coefficients $\underline{R}$ and $|R|$ identified under the equivalence. Finally, since 
$\ker r_{H^{\rm Sing}} =0$ we have that $\cA (H^{\rm Sing}) =\iRMod$.
\end{proof}
Suppose that $R$ is coherent.  We then have that $r_R : \iRmod\onto \Rmod$ is a quotient and it is an equivalence if $R=K$ is a field, \cf \cite[Example 2.1.8]{UCTI}. Thus  $\Ind r_R: \iRMod\to \RMod$ is a quotient and from Theorem \ref{ordCW}  we obtain that the realization $$r_{H^{\rm Sing}(-;R)}=\Ind r_R:\cA^{\rm Ord}_\partial({\rm CW})\cong\iRMod \onto \RMod$$  is a quotient and it is an equivalence if $R=K$ is a field.
\begin{cor}\label{NoriCW}
Let $R$ be a coherent ring. We have that $\cA(H^{\rm Sing}(-;R))= \emph{\RMod}$, \ie topological Nori motives for CW-complexes are usual $R$-modules.
\end{cor}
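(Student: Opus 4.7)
The proof is essentially a direct combination of Theorem \ref{ordCW} with the coherence hypothesis on $R$, so the plan is to identify the various players and then track what happens to the quotient.

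First I would unwind the definition: by construction $\cA(H^{\rm Sing}(-;R))$ is the quotient of $\cA^{\rm Ord}_\partial({\rm CW})$ by $\ker r_{H^{\rm Sing}(-;R)}$, where $r_{H^{\rm Sing}(-;R)}$ is the exact $R$-linear realization functor classifying the usual singular homology $H^{\rm Sing}(-;R): {\rm CW}^\square \to \RMod$. By Theorem \ref{ordCW}, the functor $!_\partial$ provides an equivalence $\iRMod \longby{\simeq} \cA^{\rm Ord}_\partial({\rm CW})$ with quasi-inverse $r_{H^{\rm Sing}}$. I would then observe that under this equivalence, the realization $r_{H^{\rm Sing}(-;R)}$ corresponds precisely to $\Ind r_R: \iRMod \to \RMod$: indeed $r_{H^{\rm Sing}(-;R)} = (\Ind r_R) \circ r_{H^{\rm Sing}}$ because both sides are exact $R$-linear functors sending the universal object $\underline{R}\leadsto R$, so they agree by the universal property of $\cA^{\rm Ord}_\partial({\rm CW})$ (Theorem \ref{univES}).

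Next I would invoke the coherence assumption: as recalled in the paragraph preceding the corollary (and in \cite[Example 2.1.8]{UCTI}), for $R$ coherent the canonical exact $R$-linear functor $r_R: \iRmod \onto \Rmod$ is a Serre quotient, so by indization $\Ind r_R: \iRMod \onto \RMod$ is a Grothendieck quotient functor. In particular $\Ind r_R$ is essentially surjective and identifies $\RMod$ with the Gabriel quotient $\iRMod/\ker(\Ind r_R)$.

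Combining these, the equivalence $!_\partial$ matches $\ker r_{H^{\rm Sing}(-;R)}$ with $\ker(\Ind r_R)$, whence
\[
\cA(H^{\rm Sing}(-;R)) \;=\; \cA^{\rm Ord}_\partial({\rm CW})/\ker r_{H^{\rm Sing}(-;R)} \;\simeq\; \iRMod/\ker(\Ind r_R) \;\simeq\; \RMod,
\]
which is the claim. There is no real obstacle here: all the substance sits in Theorem \ref{ordCW} (which pins down $\cA^{\rm Ord}_\partial({\rm CW})$ as $\iRMod$) and in the coherence-based result from \cite{UCTI} (which makes $\Ind r_R$ a quotient); the only mild care needed is to check that the identification of realizations is canonical, which is automatic from the universal property used to construct $r_{H^{\rm Sing}(-;R)}$.
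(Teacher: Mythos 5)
Your proof is correct and follows essentially the same route as the paper: Theorem \ref{ordCW} identifies $\cA^{\rm Ord}_\partial({\rm CW})$ with $\iRMod$ and the realization $r_{H^{\rm Sing}(-;R)}$ with $\Ind r_R$, and coherence of $R$ makes $\Ind r_R:\iRMod\onto\RMod$ a quotient, whence $\cA(H^{\rm Sing}(-;R))\cong\RMod$. One cosmetic remark: the identification $r_{H^{\rm Sing}(-;R)}\cong(\Ind r_R)\circ r_{H^{\rm Sing}}$ is best justified by observing that both exact functors classify the same homology $H^{\rm Sing}(-;R)$ (equivalently, after the equivalence of Theorem \ref{ordCW}, by the universal property of $\iRMod$, which does determine such functors by the image of $|R|$), rather than by Theorem \ref{univES} applied only to the image of the coefficient object $\underline{R}$.
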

\begin{remarks}
a) Consider finite and finite dimensional cellular complexes ${\rm CW}^{\rm fd}_{\rm fin}$ and singular homology with coefficients in $R$  Noetherian  
 $$H^{\rm Sing}(- ; R) : {\rm CW}^{\rm fd}_{\rm fin} \to \Rmod$$ taking values in the abelian category of finitely generated $R$-modules. From the same arguments in the proof of Theorem \ref{ordCW} we can see that $!_\partial : \iRmod\longby{\simeq} \cA_\partial^{\rm ord}({\rm CW}^{\rm fd}_{\rm fin})$ is an equivalence and the realization functor $r_{H^{\rm Sing}(- ; R)}: \cA_\partial^{\rm ord}({\rm CW}^{\rm fd}_{\rm fin})\onto \Rmod$ is a quotient.\\
 
b) Adding transfers for finite coverings, \ie $n$-sheeted covering spaces for some finite $n$, as in Remark \ref{trans}, recall that $H^{\rm Sing}(- ; R)$ is endowed with transfer homomorphisms. Thus, the previous quotient functor $r_{H^{\rm Sing}(- ; R)}$ factors through the functor $r_{H^\tr}: \cA^{\rm ord}_\partial({\rm CW}^{\rm fd}_{\rm fin})\cong \iRmod\to \cA^{\rm ord/\tr}_\partial({\rm CW}^{\rm fd}_{\rm fin})$. It appears reasonable to expect that $\cA^{\rm ord/\tr}_\partial({\rm CW}^{\rm fd}_{\rm fin})\cong \Rmod$ but a proof is missing.\\
\end{remarks}

\end{document}